\theoremstyle{plain}
\newtheorem{theorem}{Theorem}[section]
\newtheorem{lemma}[theorem]{Lemma}
\newtheorem{proposition}[theorem]{Proposition}
\newtheorem{corollary}[theorem]{Corollary}
\theoremstyle{definition}
\newtheorem{definition}[theorem]{Definition}
\newtheorem{example}[theorem]{Example}
\newtheorem{algorithm}[theorem]{Algorithm}
\newtheorem{notation}[theorem]{Notation}
\newtheorem{remark}[theorem]{Remark}
\newtheorem{parag}[theorem]{}
\newtheorem{claim}[theorem]{Claim}
\DeclareMathOperator{\Sing}{Sing}
\DeclareMathOperator{\MaxB}{\underline{\mathbf{Max}}}
\DeclareMathOperator{\EMaxB}{E-\underline{\mathbf{Max}}}
\DeclareMathOperator{\ord}{ord}
\DeclareMathOperator{\Eord}{E-ord}
\DeclareMathOperator{\ESing}{E-Sing}
\DeclareMathOperator{\Etop}{E-top}
\DeclareMathOperator{\ECoeff}{E-Coeff}
\DeclareMathOperator{\EReg}{Reg_E}
\title{Desingularization of binomial varieties in arbitrary characteristic. \\ Part II. Combinatorial desingularization algorithm.}
\author{Roc\'{\i}o Blanco\footnote{Partially supported by MMT2007-64704.}}
\date{}
\begin{document}
\maketitle
\pagestyle{headings}

\begin{abstract}

In this paper we construct a combinatorial algorithm of resolution of singularities for binomial ideals, over a field of arbitrary characteristic. This algorithm is applied to any binomial ideal. This means ideals generated by binomial equations without any restriction, including monomials and $p$-th powers, where $p$ is the characteristic of the base field. 

In particular, this algorithm works for toric ideals. However, toric geometry tools are not needed, the algorithm is constructed following the same point of view as Villamayor algorithm of resolution of singularities in characteristic zero.
\end{abstract}

\keywords{{\it Keywords}: Resolution of Singularities, Binomial ideals.} \\

\amssubj{Mathematics Subject Classification 2000: 14E15.} 

\section*{Introduction}

The existence of resolution of singularities in arbitrary dimension over a field of characteristic zero was solved by Hironaka in his famous paper \cite{Hironaka1964}. Later on, different constructive proofs have been given, among others, by Villamayor \cite{Villamayor1989}, Bierstone-Milman \cite{BierstoneMilman1997}, Encinas-Villamayor \cite{course}, Encinas-Hauser \cite{strong} and Wodarczyk \cite{Wlodarczyk2005}. 

In positive characteristic, there are some partial results, although the general problem of the existence of resolution of singularities in arbitrary dimension is still open. Recently, the results by Kawanoue \cite{KawanoueI} and his joint work with Matsuki \cite{KawanoueII} begin a sequence of four papers promising resolution of singularities in arbitrary characteristic. In \cite{Villamayor2007} Villamayor gives another approach to the existence of resolution of singularities in positive characteristic, using the previous results included in \cite{Orlando06} about graded algebras as a new tool to attack this problem. See \cite{encinas07} for its application in the case of characteristic zero. 
The most recent work by Bravo-Villamayor \cite{BravoVillamayor2008} and Benito-Villamayor \cite{BenitoVillamayor2008} gives a new procedure to deal with the case of a singular hypersurface embedded in a smooth scheme of positive characteristic. 

In the particular case of binomial ideals, there exist some specific methods of resolution of singularities for binomial varieties with suitable restrictions. In the case of toric ideals (prime binomial ideals), toric geometry tools are often used, such as subdivisions of the associated fan and toric morphisms, to obtain a resolution of singularities. For normal toric varieties over an algebraically closed field of arbitrary characteristic see \cite{toro} and \cite{Cox2000}, and \cite{GonzalezTeissier2002} and \cite{Teissier2004} for non necessarily normal toric varieties.

Bierstone and Milman construct in \cite{lemabm} an algorithm of resolution of singularities, free of characteristic, for reduced binomial ideals with no nilpotent elements. In particular, their algorithm applies to toric ideals. They use Hilbert Samuel function as resolution function, showing that the intersection of the \emph{equimultiple locus} of all the elements of the standard basis of a reduced binomial ideal with no nilpotent elements coincides with its Samuel stratum, see \cite{lemabm} Theorem $7.1$. During this resolution process $p$-th powers are never obtained at the transform ideals. In fact, this algorithm can not treat $p$-th powers of the type $(y^{\gamma}x_1-bx^{\beta})^{p^s}$.  

In this paper we consider binomial ideals without any kind of restriction, and we construct an algorithm of resolution of singularities for these binomial ideals in arbitrary characteristic that provides combinatorial centers of blowing-up. This type of centers preserve the binomial structure of the ideal after blowing-up, what let us ensure the existence of a hypersurface of maximal contact which to make induction on the dimension of the ambient space. The resolution function in which this algorithm is based is given in \cite{part1}.

By blowing up only combinatorial centers we obtain a locally monomial ideal as output. We can apply to this kind of binomial ideal some known resolution algorithm to complete the resolution process. Alternatively, we can apply again the same algorithm. If we apply our algorithm again, we can assure to obtain a log-resolution of the beginning ideal and an embedded desingularization of the corresponding binomial variety with good properties.
\medskip

This paper completes {\it Desingularization of binomial varieties in arbitrary characteristic. \\ Part I. A new resolution function and their properties}, see \cite{part1} for details. The construction of the combinatorial algorithm, that is the aim of this article, needs some technical tools of resolution of singularities. All the technical details related to the construction of the resolution function  in which this algorithm is based are given in \cite{part1}.
\medskip

In section \ref{notdef} we briefly recall some key definitions given in Part I \cite{part1}. The combinatorial algorithm \ref{combalg} is constructed in section \ref{logresbin}. The last section \ref{sec39} is devoted to prove the most important result of this paper, the theorem of embedded desingularization \ref{printeo}.  

\medskip

I thank Santiago Encinas for numerous useful suggestions to improve the presentation of this paper. I am also grateful to Antonio Campillo for his help during all this time.  

\section{Basic definitions} \label{notdef}

We remind here the main definitions and constructions given in \cite{part1}, that are necessary for the construction of the algorithm. See \cite{part1} for details.
\medskip

Let $K$ be an algebraically closed field of arbitrary characteristic, $\textbf{W}$ will be the regular ambient space.  At any stage of the resolution process, $\textbf{W}=\cup_i U_i$, where $U_i\cong\mathbb{A}^n_K$. Locally, inside any affine chart $U_i$, we consider an open set $W$. \\

Let $E=\{V_1,\ldots,V_{r}\}$ be a simple normal crossing divisor in $W$. $E$ defines a stratification of $W$ in the following way: we consider the regular closed sets $$E_{\Lambda}=\bigcap_{\lambda\in \Lambda}V_{\lambda} \text{ where } \Lambda\subseteq \{1,\ldots,r\},$$ 
by definition $E_{\emptyset}:=W$, then each $E_{\Lambda}^{0}=E_{\Lambda}\setminus\left((\cup_{j\not\in \Lambda} V_j)\cap E_{\Lambda}\right)$ is locally closed, regular and $$W=\bigsqcup_{\Lambda}E_{\Lambda}^{0}.$$ Therefore, for every $\xi\in W$ there exists a unique $\Lambda(\xi)\subseteq \{1,\ldots,r\}$ such that $\xi\in E_{\Lambda(\xi)}^{0}$.

\begin{remark} At the beginning of the resolution process $W=Spec(K[x_1,\ldots,x_n])$, $dim(W)=n$. Fix the normal crossing divisor $E=\{V_1,\ldots,V_{n}\}$, where $V_i=V(x_i)$ for each $1\leq i \leq n$, to define a stratification of $W$.
\end{remark}

Let $J\subset K[x]=K[x_1,\ldots,x_n]$ be a binomial ideal (generated by binomial and eventually monomial equations). After a blowing up $W'\rightarrow W$, binomial equations of the type $$1-\mu x^{\delta}, with\  \mu\in K, \delta\in\mathbb{N}^n$$ appear naturally in the transform ideal of $J$. 
The points $\xi'\in W'$ outside the exceptional divisor where $1-\mu x^{\delta}$ vanishes, satisfy $x^{\delta}(\xi')\neq 0$.
 We denote as $y_{i}$ each variable $x_{i}$ that do not vanish anywhere over $V(J)\cap V(1-\mu x^{\delta})$. \\

The binomial equations of $J$ of the form $1-\mu y^{\delta}$ are said to be \emph{hyperbolic equations} of $J$. In what follows we work in localized rings of the type $K[x,y]_y$. 

\begin{remark} \label{cartas} At any stage of the resolution process, inside any chart $U_i$ we consider the open set $$W=Spec(K[x,y]_y)=Spec(K[x_1,\ldots,x_s,y_1,\ldots,y_{n-s}]_y)\subset \mathbb{A}^n_K.$$ 

The normal crossing divisor $E$ is a set of normal crossing regular hypersurfaces in $\mathbb{A}^n_K$, such that  $$E=\{V(x_1),\ldots,V(x_s),V(y_1),\ldots,V(y_{n-s})\}.$$ In the open set $Spec(K[x,y]_y)$ we have $E\cap W=E\cap Spec(K[x,y]_y)=\{V(x_1),\ldots,V(x_s)\}$.
\end{remark}

\begin{definition} \label{jota}
Let $J\subset K[x,y]_y$ be an ideal. We will say $J$ is a \emph{binomial ideal} if it is generated by binomial equations of the type
\begin{equation} J\!=<\!f_1(x,y),\ldots,f_m(x,y)\!> \text{ with } f_i(x,y)\!=\!x^{\lambda_i}(1-\mu_i y^{\delta_i}) \text{ or } f_j(x,y)\!=\!x^{\nu_j}(y^{\gamma_j}x^{\alpha_j}-b_jx^{\beta_j}),  \end{equation} 
with $\alpha_j,\beta_j\in {\mathbb{N}}^n$, $\delta_i,\gamma_j\in {\mathbb{Z}}^n$, $\lambda_i, \nu_j\in {\mathbb{N}}^n$ and $\mu_i,b_j\in K$ for every $1\leq i,j \leq m$.
And where, for each $j$, every equation of the type $y^{\gamma_j}x^{\alpha_j}-b_j x^{\beta_j}$ has no common factors.\\

Denote $|\alpha_j|=\sum_{k=1}^s \alpha_{j,k}$ and $\ |\beta_j|=\sum_{k=1}^s \beta_{j,k}$.
Assume $0<|\alpha_j|\leq|\beta_j|$. 
\end{definition}

\begin{notation} A unique non hyperbolic binomial equation without common factors, will be denoted 
\begin{equation}
f(x,y)=y^{\gamma}x^{\alpha}-bx^{\beta} \text{ where } \alpha,\beta\in\mathbb{N}^n, \gamma\in\mathbb{Z}^n, b\in K \text{ and } 0<|\alpha|\leq |\beta| \label{efegamma} \end{equation} $\alpha=(\alpha_1,\ldots,\alpha_k,0,\ldots,0)$, $\beta=(0,\ldots,0,\beta_{k+1},\ldots,\beta_{k+(s-k)},0,\ldots,0)$ and $\gamma=(0,\ldots,0,\gamma_{1},\ldots,\gamma_{n-s})$ for some $1\leq k\leq s$.  
\end{notation}

\begin{remark} Fixed a normal crossing divisor $E$ as above, we define a modified order function, the $E$-order. Given a binomial ideal $J$, the $E$-order function (associated to $J$), $E\!-\!ord_J$, computes the order of the ideal $J$ along $E\cap W$.
\end{remark}

\begin{definition} \label{Eord}  Let $J\subset \mathcal{O}_{W}$ be a binomial ideal, $W=Spec(K[x,y]_y)$. Let $E=\{V_1,\ldots,V_n\}$ be a normal crossing divisor in $\mathbb{A}^n_K$. Let $\xi\in W$ be a closed point and let $\Lambda(\xi)$ be a subset of $\{1,\ldots,n\}$ such that $\xi\in E_{\Lambda(\xi)}^{0}$. We call \emph{$E$-order} of $J_{\xi}$ in $\mathcal{O}_{W,\xi}$ to the order of the ideal with respect to the $I(E_{\Lambda(\xi)}^{0})_{\xi}$-adic topology $$\Eord_{\mathcal{O}_{W,\xi}}(J_{\xi})=\max\left\{m \in \mathbb{N} /\ J_{\xi}\subset (I(E_{\Lambda(\xi)}^{0})_{\xi})^m \right\}.$$ 
\end{definition}

\begin{definition}
Let $J\subset \mathcal{O}_{W}$ be a binomial ideal as in \ref{jota}. Let $\xi \in W$ be a point. The \emph{$E$-order} function (associated to $J$) is defined as follows, $$\begin{array}{rl} \Eord_J: & W\rightarrow \mathbb{N} \\ & \xi \rightarrow \Eord_J(\xi)=\Eord_{\xi}(J):=\Eord_{\mathcal{O}_{W,\xi}}(J_{\xi}) \end{array}$$ The $E$-order of $J$ at $\xi$ will be denoted $\Eord_{\xi}(J)$. The $E$-order of any binomial equation $f\in J$ at $\xi$, is defined as the $E$-order of the ideal $<f>$ at the point $\xi$. 
\end{definition}

We remind here the technical notion of \emph{binomial basic object along $E$}.

\begin{definition} \label{binbo} An \emph{affine binomial basic object along $E$} (BBOE) is a tuple $B=(W,(J,c),H,E)$ where 
\begin{itemize}
	\item  $W=Spec(K[x,y]_y)=Spec(K[x_1,\ldots,x_s,y_1,\ldots,y_{n-s}]_y)\subset \mathbb{A}^n_K$.
	\item $E$ is a set of normal crossing regular hypersurfaces in $\mathbb{A}^n_K$, such that  $$E=\{V(x_1),\ldots,V(x_s),V(y_1),\ldots,V(y_{n-s})\}.$$ In the open set $Spec(K[x,y]_y)$ we have $E\cap Spec(K[x,y]_y)=\{V(x_1),\ldots,V(x_s)\}$.	
	\item $J$ is a binomial ideal as in (\ref{jota}), and $c$ is a positive integer number.
	\item $H\subset E$ is a set of normal crossing regular hypersurfaces in $W$.
\end{itemize}
\end{definition}  

\begin{definition} A \emph{non affine binomial basic object along $E$} is a tuple $\textbf{B}=(\textbf{W},(\mathcal{J},c),H,E)$ which is covered by affine BBOE. Where 
\begin{itemize}
	\item $\textbf{W}$ is the regular ambient space over a field $K$ of arbitrary characteristic.
	\item $E$ is a set of normal crossing regular hypersurfaces in $\textbf{W}$.
	\item $(\mathcal{J},c)$ is a \emph{binomial pair}, that is, $\mathcal{J}\subset \mathcal{O}_{\textbf{W}}$ is a coherent sheaf of binomial ideals with respect to $E$, as in \ref{jota}, satisfying $\mathcal{J}_{\xi}\neq 0$ for each $\xi\in \textbf{W}$, and $c$ is a positive integer number.
	\item $H\subset E$ is a set of normal crossing regular hypersurfaces in $\textbf{W}$.
\end{itemize}
\end{definition}

The definition of \emph{$E$-singular locus} along a normal crossing divisor $E$ is analogous to the usual definition of singular locus. 

\begin{definition} Let $J\subset \mathcal{O}_{W}$ be an ideal, $c$ a positive integer. We call \emph{$E$-singular locus} of $J$ with respect to $c$ to the set, $$\ESing(J,c)=\{\xi \in W/\ \Eord_{\xi}(J)\geq c\}.$$
\end{definition}

\begin{remark} \label{idpair}
Hironaka introduces the notion of equivalence of \emph{pairs} and using this notion, the definition of \emph{idealistic exponent} or \emph{idealistic pair} as an equivalence class of such pairs. See Hironaka \cite{Hironaka1977} for more details.  
\end{remark}

\begin{remark}\label{equipair}
We always consider pairs $(J,c)$ or binomial basic objects $(W,(J,c),H,E)$ along $E$. This is because of for every point $\xi\in \ESing(J,c)$, the quotient $\frac{\Eord_{\xi}(J)}{c}$ can be defined in terms of the binomial basic object along $E$, modulo the equivalence relation between idealistic exponents. 
\end{remark}
 
\begin{definition} Let $(W,(J,c),H,E)$ be a BBOE, where $W=Spec(K[x_1,\ldots,x_s,y_1,\ldots,y_{n-s}]_y)$ and $E=\{V(x_1),\ldots,V(x_s),V(y_1),\ldots,$ $V(y_{n-s})\}=\{V_1,\ldots,V_n\}$. Let $H=\{H_1,\ldots,H_r\}$ be a normal crossing divisor, $H_i=V(x_j)$ with $1\leq j\leq s$ for each $i$. 

We define a \emph{transformation} of the binomial basic object $$(W,(J,c),H,E)\leftarrow (W',(J',c),H',E')$$ by means of the blowing up $W\stackrel{\pi}{\leftarrow} W'$, along a combinatorial center $Z\subset \ESing(J,c)$, where $W'$ is the strict transform of $W$. With
\begin{itemize}
	\item $H'=\{H_1^{\curlyvee},\ldots,H_r^{\curlyvee},Y'\}$ where $H_i^{\curlyvee}$ is the strict transform of $H_i$ and $Y'$ is the exceptional divisor in $W'$.
	\item $E'=\{V_1^{\curlyvee},\ldots,V_n^{\curlyvee},Y'\}$ where $V_i^{\curlyvee}$ is the strict transform of $V_i$ and $Y'$ is the exceptional divisor in $W'$.
	\item $J'=J^{!}=I(Y')^{\theta-c}\cdot J^{\curlyvee}$ is the \emph{controlled} transform of $J$, where $\theta=\max\ \Eord(J)$ and $J^{\curlyvee}$ is the weak transform of $J$.
\end{itemize}
\end{definition}

\begin{remark} In this context, a combinatorial center is given by the intersection of coordinate hypersurfaces defined by variables $x_i$. 
\end{remark}

\begin{definition} A sequence of transformations of binomial basic objects {\small \begin{equation}   (W^{(0)}\!,(J^{(0)},c),H^{(0)}\!,E^{(0)})\!\leftarrow\!(W^{(1)}\!,(J^{(1)},c),H^{(1)}\!,E^{(1)})\!\leftarrow\!\cdots \leftarrow\!(W^{(N)}\!,(J^{(N)},c),H^{(N)}\!,E^{(N)}) \label{Eresolution} \end{equation}} is a \emph{$E$-resolution} of $(W^{(0)},(J^{(0)},c),H^{(0)},E^{(0)})$ if $\ESing(J^{(N)},c)=\emptyset$. 
\end{definition}

\begin{proposition} \label{usc} Let $J\subset \mathcal{O}_{W}$ be a binomial ideal as in definition \ref{jota}, $\xi\in W$. Then $$\begin{array}{cl} \Eord_J: & W \rightarrow (\mathbb{Z},\leq) \\ & \   \xi \rightarrow \Eord_J(\xi):=\Eord_{\xi}(J)\end{array}$$ is an upper semi-continuous function. 
\end{proposition}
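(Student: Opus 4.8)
The plan is to show that for every $c$, the set $\ESing(J,c) = \{\xi \in W : \Eord_\xi(J) \geq c\}$ is closed in $W$; equivalently, that $\{\xi : \Eord_\xi(J) \leq c\}$ is open for every $c$. I would argue locally, working inside an affine chart $W = \mathrm{Spec}(K[x_1,\dots,x_s,y_1,\dots,y_{n-s}]_y)$, so that $E \cap W = \{V(x_1),\dots,V(x_s)\}$. The key point is that the $E$-order at $\xi$ is, by Definition \ref{Eord}, the order of $J_\xi$ with respect to the $I(E_{\Lambda(\xi)}^0)_\xi$-adic topology, and $E_{\Lambda(\xi)}^0$ is the stratum determined by exactly which of the $x_i$ vanish at $\xi$. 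So the first step is to get an explicit, point-independent formula for $\Eord_\xi(J)$ in terms of the exponents appearing in the generators of $J$.

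The main computation: for a single generator, the $E$-order should be read off directly from the monomial factors in the $x_i$. If $f = x^\lambda(1 - \mu y^\delta)$ is hyperbolic, then since the $y_i$ are units in the localized ring, $\langle f\rangle = \langle x^\lambda\rangle$ near any point of its zero set, so its $E$-order along a stratum $E_\Lambda^0$ (cut out by $\{x_i = 0 : i \in \Lambda\}$, with the other $x_i$ nonzero) is $\sum_{i \in \Lambda} \lambda_i$. If $f = x^\nu(y^\gamma x^\alpha - b x^\beta)$ with the binomial part having no common factor and $0 < |\alpha| \leq |\beta|$, then at a point $\xi$ the $E$-order of $\langle f\rangle$ is $\sum_{i \in \Lambda(\xi)} \nu_i$ plus the contribution of the binomial factor, which (since $y^\gamma x^\alpha$ and $x^\beta$ share no variable, and $|\alpha| \le |\beta|$) equals $\sum_{i \in \Lambda(\xi)} \alpha_i$ — unless $\xi$ also lies on some $V(x_i)$ with $\beta_i > 0$, in which case $x^\beta$ vanishes too and one must be slightly more careful, but the minimum of the two monomial orders still governs it. In all cases, $\Eord_\xi(\langle f\rangle) = \min\bigl(\sum_{i\in\Lambda(\xi)}(\nu_i + \alpha_i),\ \sum_{i\in\Lambda(\xi)}(\nu_i+\beta_i)\bigr) = \sum_{i \in \Lambda(\xi)}\nu_i + \min\bigl(\sum_{i\in\Lambda(\xi)}\alpha_i, \sum_{i\in\Lambda(\xi)}\beta_i\bigr)$. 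Then $\Eord_\xi(J) = \min_j \Eord_\xi(\langle f_j\rangle)$. This gives a function that depends on $\xi$ only through the subset $\Lambda(\xi) \subseteq \{1,\dots,s\}$, and is a minimum of sums of nonnegative integers indexed by that subset.

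Given this formula, semi-continuity is a combinatorial statement about the stratification: I would show that if $\xi' $ is a specialization of $\xi$ (i.e. $\xi \in \overline{\{\xi'\}}$, so $\xi'$ is "more special"), then $\Lambda(\xi) \subseteq \Lambda(\xi')$, because passing to a point in the closure can only make more of the coordinate hypersurfaces $V(x_i)$ pass through it. Since each per-generator contribution is a sum over $\Lambda$ of nonnegative terms, enlarging $\Lambda$ can only increase it; taking the minimum over generators preserves the inequality, so $\Eord_{\xi'}(J) \geq \Eord_\xi(J)$. Concretely, this says $\ESing(J,c)$ is stable under specialization and is a finite union of strata-closures $\overline{E_\Lambda^0} = E_\Lambda$, hence closed; equivalently, the locus where $\Eord_J \le c$ is a union of the open strata $E_\Lambda^0$, which is open.

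The step I expect to be the main obstacle is pinning down the $E$-order of a binomial generator $y^\gamma x^\alpha - b x^\beta$ at a point $\xi$ lying on one of the hypersurfaces $V(x_i)$ that divides $x^\beta$ (so that both monomials of the binomial vanish there). One needs the "no common factor" hypothesis together with $0 < |\alpha| \le |\beta|$ to guarantee that after factoring out $I(E_{\Lambda(\xi)}^0)^{|\alpha|_{\Lambda(\xi)}}$ (where $|\alpha|_\Lambda := \sum_{i\in\Lambda}\alpha_i$), what remains is not contained in a further power of that ideal — i.e. that the residual binomial is genuinely a unit times something of order zero along the stratum, or at worst that the order is exactly the minimum of the two monomial orders and no cancellation raises it. This is where one must be careful that $J$ is a \emph{binomial} ideal in the precise sense of Definition \ref{jota} (generated by such binomials, with the stated normalization), and invoke the corresponding order computation from Part I \cite{part1}; with that in hand, the rest is the routine combinatorics sketched above.
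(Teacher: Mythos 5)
Your argument is sound, and note that for this statement the paper itself offers no proof at all --- it simply cites Part I \cite{part1} --- so yours is a self-contained substitute rather than a variant of an argument in the text. The route you take (reduce to closedness of $\ESing(J,c)$, express $\Eord_{\xi}$ of each generator as a function of $\Lambda(\xi)$ alone, observe monotonicity under enlarging $\Lambda$, conclude that $\{\Eord_J\geq c\}$ is a finite union of the closed sets $E_{\Lambda}$) is the natural one, and the step you flag as the main obstacle does close without importing anything from Part I: in the associated graded ring of the $I(E_{\Lambda(\xi)}^{0})_{\xi}$-adic filtration, which is a polynomial ring in variables $T_i$, $i\in\Lambda(\xi)$, over the local ring of the stratum (regular, hence a domain), the initial forms of $x^{\nu}y^{\gamma}x^{\alpha}$ and $x^{\nu}bx^{\beta}$ are distinct monomials with unit coefficients --- here you use that $\alpha$ and $\beta$ have disjoint supports and that the $y$'s and the $x_j$ with $j\notin\Lambda(\xi)$ do not vanish at $\xi$ --- so no cancellation can raise the order and indeed $\Eord_{\xi}\bigl(\langle x^{\nu}(y^{\gamma}x^{\alpha}-bx^{\beta})\rangle\bigr)=\sum_{i\in\Lambda(\xi)}\nu_i+\min\bigl(\sum_{i\in\Lambda(\xi)}\alpha_i,\sum_{i\in\Lambda(\xi)}\beta_i\bigr)$; together with $\Eord_{\xi}(J)=\min_j\Eord_{\xi}(\langle f_j\rangle)$ (containment in $I^m$ is checked on generators), this gives your point-independent formula.

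Two small slips to repair, neither a real gap. First, for a hyperbolic generator the parenthetical claim that $\langle x^{\lambda}(1-\mu y^{\delta})\rangle=\langle x^{\lambda}\rangle$ near any point of its zero set is false when $1-\mu y^{\delta}$ vanishes at the point; what you need (and what is true) is only that $1-\mu y^{\delta}$ has $E$-order zero along the stratum, since its image in $\mathcal{O}_{W,\xi}/I(E_{\Lambda(\xi)}^{0})_{\xi}$ is nonzero, whence $\Eord_{\xi}=\sum_{i\in\Lambda(\xi)}\lambda_i$ by additivity of the order in the graded domain. Second, your specialization sentence swaps the roles of $\xi$ and $\xi'$: if $\xi\in\overline{\{\xi'\}}$ then $\Lambda(\xi')\subseteq\Lambda(\xi)$ and hence $\Eord_{\xi}(J)\geq\Eord_{\xi'}(J)$; with the labels straightened out, the conclusion that $\{\Eord_J\geq c\}$ is a union of full strata $E_{\Lambda}$, hence closed, is exactly right.
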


\begin{proof} See \cite{part1}.
\end{proof}

\begin{corollary} Let $J\subset \mathcal{O}_{W}$ be a binomial ideal as in definition \ref{jota}. Let $c$ be a positive integer number. Then $\ESing(J,c)$ is a closed set. 
\end{corollary}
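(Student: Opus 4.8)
The plan is to read the claim off directly from Proposition \ref{usc}. By definition, $\ESing(J,c)=\{\xi\in W :\ \Eord_\xi(J)\geq c\}$, so this set is precisely the super-level set of the function $\Eord_J$ at the threshold $c$. Recall that a function $f$ with values in the totally ordered set $(\mathbb{Z},\leq)$ is upper semi-continuous exactly when every sublevel set $\{f<m\}$ is open; passing to complements, every set of the form $\{f\geq m\}$ is then closed. Applying this with $f=\Eord_J$ and $m=c$, which is legitimate since $c$ is an integer, yields that $\ESing(J,c)$ is closed in $W$.

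A small remark is in order on why no localization or chart-by-chart argument is needed: although $W$ is covered by affine charts $U_i\cong\mathbb{A}^n_K$ and the notion of $E$-order is defined locally, Proposition \ref{usc} already asserts upper semi-continuity of $\Eord_J$ on all of $W$, so the reduction to charts has been carried out there. The only point worth checking is that the threshold $c$ lies in the range of relevant values, but this is immediate: $\Eord_\xi(J)$ is a genuine order and hence a non-negative integer, and $c$ is a positive integer by hypothesis, so there is nothing further to verify. Accordingly, there is no real obstacle in this argument — the entire content has been absorbed into Proposition \ref{usc}, and the corollary is a formal consequence of it.
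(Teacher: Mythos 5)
Your argument is correct and is exactly the intended one: the paper states this corollary without proof as an immediate consequence of Proposition \ref{usc}, since $\ESing(J,c)$ is the super-level set $\{\Eord_J\geq c\}$ of an upper semi-continuous function and is therefore closed.
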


\begin{remark} \label{equivdef}  In addition, the $E$-order function is an \emph{equivariant} function (invariant by the torus action).
\end{remark}

The $E$-order let us to solve the problem of the existence of hypersurfaces of maximal contact in positive characteristic, in the particular case of binomial ideals. We remind here the definition of \emph{hypersurface of E\text{-}maximal contact}.

\begin{definition} Let $J\subset \mathcal{O}_W$ be a binomial ideal as in definition \ref{jota}. Let $\xi\in W$ be a point such that $\Eord_{\xi}(J)=\max\ \Eord(J)=\theta$, $V$ is said to be a hypersurface of  \emph{maximal contact along $E$} for $J$ at $\xi$ (denoted by hypersurface of \emph{E\text{-}maximal contact}) if 
\begin{itemize}
	\item [-] $V$ is a regular hypersurface, $\xi\in V$,
	\item [-] $\ESing(J,\theta) \subseteq V$ and their transforms under blowing up along a center $Z\subset V$ also 
	          satisfy $\ESing(J',\theta) \subseteq V'$, where $J'$ is the controlled transform of $J$ and $V'$ is the strict transform of $V$. 
\end{itemize}
\end{definition}  

\begin{remark} \label{esing} Let $J\subset K[x,y]_y$ be a binomial ideal. Let $\xi \in Spec(K[x,y]_y)$ be a point where $\Eord_{\xi}(J)=\theta>0$ is maximal. It can be proved that in a neighborhood of $\xi$, $$\ESing(J,\theta) \subseteq \{x_i=0\} \text{ for some } 1\leq i\leq s. $$  
\end{remark}

\begin{remark}
As a consequence, the hypersurfaces of $E$-maximal contact will always be given by coordinate equations. The existence of these hypersurfaces it is proved in \cite{part1}. Hence the centers of blowing up will always be combinatorial. 
\end{remark}

We remind briefly the definition and properties of the $E$-resolution function defined in \cite{part1}. To define the $E$-resolution function we rewrite \emph{mobiles language}. See \cite{strong} for more details.
\medskip

Given $(W,(J,c),H,E)$ a BBOE, by induction on the dimension of $W$, construct ideals $J_i$ defined in local flags $W=W_n\supseteq W_{n-1}\supseteq \cdots \supseteq W_i\supseteq  \cdots \supseteq W_1$, and then binomial basic objects $(W_i,(J_i,c_{i+1}),H_i,E_i)$ in dimension $i$, where each $E_i=W_i\cap E$. 

\begin{remark} \label{fac}
If $\ESing(J_i,c_{i+1})\neq\emptyset$ then factorize the ideal $J_i=M_i\cdot I_i$, where each ideal $M_i$ is defined by a normal crossings divisor $D_i$ supported by the current exceptional locus. 
\end{remark}
 
\begin{definition} Let $J_i=M_i\cdot I_i$ be an ideal in $W_i$ at $\xi\in W_i$. Set $\theta_i=\Eord_{\xi}(I_i)$. 
The {\it companion ideal} of $J_i$ at $\xi$, with respect to the critical value $c_{i+1}$ satisfying $\Eord_{\xi}(J_i)\geq c_{i+1}$, is the ideal 
$$P_i= \left\{\begin{array}{ll} I_i & \text{ if }\ \theta_i\geq
c_{i+1} \\ I_i+M_i^{\frac{\theta_i}{c_{i+1}-\theta_i}} & \text{ if }\ 0< \theta_i< c_{i+1} \end{array}\right.$$ 
\end{definition}   

\begin{definition} Let $P_i$ be an ideal in $W_i$. Let $V\subset W_i$ be a hypersurface of $E$-maximal contact for $P_i$ at $\xi \in V$. Denote $\Eord_{\xi}(P_i)=c_i$. The {\it junior ideal} of $P_i$ in $V$ is the ideal
$$J_{i-1}= \left\{
\begin{array}{ll} \ECoeff_V(P_i) & \text{ if } \ECoeff_V(P_i)\neq 0 \\ 1 & \text{ if } \ECoeff_V(P_i)=0 \end{array} \right. $$ where $\ECoeff_{V}(P_i)$ is the coefficient ideal of $P_i$ along $E$ in $V$. See \cite{part1} for details.
\end{definition}

\begin{definition} \label{Einv}
Let $(W,(J,c),H,E)$ be a binomial basic object along $E$. For all point $\xi\in \ESing(J,c)$ the $E$-resolution function $t$ will have $n$ components with lexicographical order, and it will be of one of the following types: $$\begin{array}{ll} (a) &
t(\xi)\!=\!(t_n(\xi),t_{n-1}(\xi),\ldots,t_{n-r}(\xi),\ \infty,\
\infty,\ldots,\infty)
\\ (b) & t(\xi)\!=\!(t_n(\xi),t_{n-1}(\xi),\ldots,t_{n-r}(\xi),
\Gamma(\xi),\infty,\ldots,\infty)
\\ (c) & t(\xi)\!=\!(t_n(\xi),t_{n-1}(\xi),\ldots,t_{n-r}(\xi),\ldots
\ldots \ldots,t_1(\xi)) \end{array}\ \text{ with } t_i(\xi)\!=\!\frac{\theta_i}{c_{i+1}} \text{ if } \theta_i\!>\!0 $$ where $\theta_i=\Eord_{\xi}(I_i)$ and $c_{i+1}=\max\ \Eord(P_{i+1})$ is the critical value in dimension $i$.

In the case $J_i=1$, define $t_i(\xi)=\infty$ and complete the $E$-resolution function $t$ with so many $\infty$ com\-ponents as needed in order to have always the same number of components, that is, $(t_{i-1}(\xi),\ldots,t_1(\xi))=(\infty,\ldots,\infty)$. 

If $\theta_i=0$ then $t_i(\xi)=\Gamma(\xi)$, where $\Gamma$ is the resolution function corresponding to the \emph{monomial case}, see \cite{course}. And complete the $E$-resolution function $(t_{i-1}(\xi),\ldots,t_1(\xi))=(\infty\ldots,\infty)$.
\end{definition}

\begin{lemma} Let $J\subset \mathcal{O}_W$ be a binomial ideal. Let $\xi\in W$ be a point. Let $\mathcal{I}$ be a totally ordered set with the lexicographical order. The function $$\begin{array}{rl} t: & W \rightarrow (\mathcal{I},\leq) \\ & \ \xi \ \rightarrow t(\xi) \end{array}$$ is upper semi-continuous.
\end{lemma}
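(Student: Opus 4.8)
The goal is to show that the $E$-resolution function $t\colon W\to(\mathcal{I},\leq)$ is upper semi-continuous, i.e., that for every $\alpha\in\mathcal{I}$ the set $\{\xi\in W : t(\xi)\geq\alpha\}$ is closed. Since $\mathcal{I}$ carries the lexicographical order and $t(\xi)=(t_n(\xi),\ldots,t_1(\xi))$, the strategy is the standard one: reduce the semi-continuity of the lexicographic vector to the semi-continuity of each component $t_i$ \emph{relative to the locus on which the higher components $t_n,\ldots,t_{i+1}$ are already maximal and constant}. Concretely, I would first note that $\{t\geq\alpha\}$ can be written as a finite union/intersection of sets of the form $\{t_n=\alpha_n,\ldots,t_{i+1}=\alpha_{i+1},\ t_i\geq\alpha_i\}$, so it suffices to prove that on each stratum where $t_n,\ldots,t_{i+1}$ are locally constant and equal to $\max t_n,\ldots,\max t_{i+1}$, the function $t_i$ is upper semi-continuous.

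The main work is then an induction on the dimension, descending from $W=W_n$ to $W_1$, tracking the chain $W_n\supseteq W_{n-1}\supseteq\cdots\supseteq W_1$ of local flags together with the binomial basic objects $(W_i,(J_i,c_{i+1}),H_i,E_i)$. At each level I would argue as follows. On the locus where the higher components are maximal, $W_{i}$ is (locally) a regular hypersurface of $E$-maximal contact for the companion ideal $P_{i+1}$, and $J_i$ is its junior ideal, namely the coefficient ideal $\ECoeff_V(P_{i+1})$ along $E$; these constructions are canonical on that locus (this is exactly what Remarks \ref{esing} and the subsequent remark, together with the results of \cite{part1}, provide — the hypersurface is coordinate, hence the construction is well-defined and equivariant, not just local). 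Having fixed $J_i$, write $J_i=M_i\cdot I_i$ as in Remark \ref{fac}, with $M_i$ monomial supported on the exceptional locus. Then $\theta_i=\Eord_\xi(I_i)$ is upper semi-continuous by Proposition \ref{usc} applied to the binomial ideal $I_i$, and $c_{i+1}=\max\Eord(P_{i+1})$ is a constant on the relevant stratum; hence $t_i(\xi)=\theta_i/c_{i+1}$ is upper semi-continuous there. The three case types $(a),(b),(c)$ of Definition \ref{Einv} are handled by the conventions that $t_i=\infty$ when $J_i=1$ (an open condition on $\ESing$, so compatible with upper semi-continuity) and $t_i=\Gamma(\xi)$ when $\theta_i=0$, where $\Gamma$ is the monomial-case resolution function, whose upper semi-continuity is classical (see \cite{course}); I would simply invoke that and check that the transition between the "$\theta_i>0$" regime and the "$\theta_i=0$" regime respects the ordering (the monomial value $\Gamma$ being placed in the $\Gamma(\xi)$-slot of type $(b)$, below all finite $t_i/c_{i+1}$ values and above the $\infty$ padding).

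The delicate point — and the step I expect to be the main obstacle — is the passage from level $i+1$ to level $i$: one must verify that the ideal $J_i$ (built as the $E$-coefficient ideal of the companion ideal $P_{i+1}$ inside a hypersurface of $E$-maximal contact) is well-defined and behaves coherently in a neighborhood of a point where the higher components are maximal, so that "$\theta_i$ is u.s.c." is a statement about an honest binomial ideal and not about a merely pointwise recipe. This requires that the hypersurface of $E$-maximal contact can be chosen coordinate and compatibly along the maximal-contact stratum, and that forming $\ECoeff_V$ and the companion-ideal operation $I_i+M_i^{\theta_i/(c_{i+1}-\theta_i)}$ commute appropriately with restriction and specialization; all of this is established in \cite{part1}, so in the write-up I would cite those results precisely and reduce the proof to repeated application of Proposition \ref{usc} and the classical monomial-case statement. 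Once the inductive step is in place, the lexicographic structure of $\mathcal{I}$ gives the result immediately.
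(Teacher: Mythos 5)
The paper does not actually prove this lemma: like Proposition \ref{usc}, it is recalled from Part I \cite{part1}, where the $E$-resolution function and its properties are established, so there is no argument in this paper to compare yours against line by line. Judged on its own terms, your sketch is the expected Encinas--Villamayor-style induction on the dimension of the local flag, and it uses exactly the ingredients the paper makes available: Proposition \ref{usc} for the $E$-order of the ideals $I_i$, constancy of the critical values $c_{i+1}=\max\Eord(P_{i+1})$, the monomial function $\Gamma$ of \cite{course} for the case $\theta_i=0$, and the conventions $t_i=\infty$ when $J_i=1$. You also correctly identify the genuinely delicate point, namely that the passage $P_{i+1}\rightsquigarrow J_i=\ECoeff_V(P_{i+1})$ must produce an honest binomial ideal defined and coherent in a neighbourhood (via a coordinate hypersurface of $E$-maximal contact, Remark \ref{esing}), so that applying Proposition \ref{usc} to $I_i$ makes sense; that is precisely what is delegated to \cite{part1}.

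Two points in your reduction need tightening. First, restricting to the locus where the higher components $t_n,\ldots,t_{i+1}$ are \emph{maximal} is what one needs to show that $\EMaxB(t)$ is closed, but upper semi-continuity at an arbitrary point $\xi\in\ESing(J,c)$ is the local statement that $t(\eta)\leq t(\xi)$ for all $\eta$ in a neighbourhood of $\xi$; to get it you should run the same componentwise induction along the local flag at $\xi$, on the locus where the higher components are locally constant and equal to their values \emph{at $\xi$} (not equal to the global maxima). Second, writing $\{t\geq\alpha\}$ as a finite union of sets of the form $\{t_n=\alpha_n,\ldots,t_{i+1}=\alpha_{i+1},\,t_i\geq\alpha_i\}$ tacitly uses that $t$ takes only (locally) finitely many values; this does hold here, because the $E$-orders occurring are bounded by $\max\Eord$ at the given stage and the denominators $c_{i+1}$ range over a finite set, but it has to be stated, since upper semi-continuity of the individual components alone does not make such a union closed. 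With these adjustments your argument is the standard one and is surely the proof intended in \cite{part1}.
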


\begin{corollary} As a consequence, $$\EMaxB(t)=\{\xi\in \ESing(J,c)|\ t(\xi)=\max\ t\}$$ is a closed set. In fact, it is the next center to be blown up. 
\end{corollary}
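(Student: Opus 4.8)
The plan is to deduce this corollary directly from the upper semi-continuity of the $E$-resolution function $t$ established in the preceding lemma, exactly as one deduces that a maximal-value set of a u.s.c.\ function is closed. First I would recall that $\ESing(J,c)$ is itself a closed subset of $W$ (this follows from the corollary after Proposition \ref{usc}, since $\Eord_J$ is u.s.c.), so we are working inside a fixed closed subspace; restricting $t$ to $\ESing(J,c)$ preserves upper semi-continuity. Since the target $(\mathcal{I},\leq)$ is totally ordered by the lexicographical order and $\ESing(J,c)$ is quasi-compact (being a closed subscheme of a Noetherian scheme), the function $t$ attains a maximum value $\max t$ on $\ESing(J,c)$; this uses that a u.s.c.\ function on a quasi-compact space into a totally ordered set with the order topology attains its supremum, together with the fact that the value set of $t$ is finite (the components $t_i(\xi)=\theta_i/c_{i+1}$ take only finitely many rational values with bounded denominators, the $\Gamma$-component of the monomial case takes finitely many values, and $\infty$ is a single symbol), so the supremum is in fact realized.

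Next I would write $\EMaxB(t) = \{\xi \in \ESing(J,c) \mid t(\xi) \geq \max t\}$, which is legitimate precisely because $\max t$ is the top value, so ``$t(\xi)=\max t$'' and ``$t(\xi)\geq \max t$'' define the same set. By the definition of upper semi-continuity for a function valued in an ordered set, for every $m\in\mathcal{I}$ the superlevel set $\{\xi \mid t(\xi)\geq m\}$ is closed; applying this with $m=\max t$ shows $\EMaxB(t)$ is closed in $\ESing(J,c)$, hence closed in $W$.

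The only point requiring a little care — and the step I would flag as the main obstacle, though it is minor — is justifying that the maximum is actually attained rather than merely approached, because $\mathcal{I}$ carries the lexicographic order on an $n$-tuple and one must check that u.s.c.\ into this ordered set still forces attainment on a quasi-compact space. This is handled by the finiteness of the image of $t$ noted above: because each coordinate of $t(\xi)$ ranges over a finite set (bounded numerators and a common bounded denominator coming from the critical values $c_{i+1}$, plus the symbol $\infty$ and the finitely many values of $\Gamma$), the image $t(\ESing(J,c))$ is a finite totally ordered set, which has a largest element, and the preimage of that largest element is closed by the lemma. The final sentence of the statement, that $\EMaxB(t)$ is the next center to be blown up, is not an assertion to be proved here but a pointer to the construction of the algorithm \ref{combalg}; I would simply remark that $\EMaxB(t)$ is a combinatorial center contained in $\ESing(J,c)$, as needed for a transformation of the binomial basic object, deferring the verification of combinatoriality to the discussion of the algorithm.
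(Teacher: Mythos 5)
Your proposal is correct and follows essentially the same route as the paper, which states this corollary as an immediate consequence of the upper semi-continuity lemma for $t$ and gives no separate argument: the superlevel set at the maximal value is closed, and the attainment of the maximum is guaranteed (as you note, by finiteness of the image, or equivalently by the descending chain condition on closed sets in a Noetherian space). Your decision to treat the final sentence as a pointer to the algorithm rather than as a claim to prove also matches the paper's intent.
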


\begin{remark}
Moreover, by construction $\EMaxB(t)=Z=\cap_{i\in \mathcal{I}}\{x_i=0\}$ with $\mathcal{I}\subseteq \{1,\ldots,n\}$.
\end{remark}

The $E$-resolution function drops lexicographically after blowing up.

\begin{lemma} \label{bajainv} Let $(W,(J,c),H,E)$ be a binomial basic object along $E$, where $J\neq 1$. Let $W \stackrel{\pi}{\leftarrow}W'$ be the blow up along $Z=\EMaxB(t)$. Then $$t(\xi)>t'(\xi')$$ for all $\xi\in Z$, $\xi'\in Y'=\pi^{-1}(Z)$, $\pi(\xi')=\xi$, where $t$ is the $E$-resolution function corresponding to $(W,(J,c),H,E)$ and $t'$ corresponds to $(W',(J',c),H',E')$, its transform by the blow up $\pi$.  
\end{lemma}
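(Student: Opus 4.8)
The plan is to prove the statement by induction on $n=\dim W$, comparing $t$ and $t'$ componentwise in the lexicographical order. Write $J=M_n\cdot I_n$ as in Remark \ref{fac}, so that the top component of $t$ is $t_n(\xi)=\theta_n/c$ with $\theta_n=\Eord_\xi(I_n)$, and recall that after the blow up the ideal is replaced by the controlled transform $J'=I(Y')^{\theta-c}J^\curlyvee$, where $\theta=\max\Eord(J)$. The first step is to control the top component: using the standard estimates for the controlled transform one gets $\Eord_{\xi'}(J')\le\theta$ for every $\xi'\in Y'$, hence either $t_n(\xi')<t_n(\xi)$, in which case $t(\xi')<t(\xi)$ and there is nothing more to do, or $t_n(\xi')=t_n(\xi)$ and $\xi'$ lies in the equi-$E$-order locus of the transformed ideal.

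In this remaining case I would pass to the hypersurface of $E$-maximal contact. Let $V\subset W$ be a hypersurface of $E$-maximal contact for the companion ideal $P_n$ at $\xi$; since $Z=\EMaxB(t)$ is combinatorial, it is an intersection of coordinate hypersurfaces, so $Z\subset V$, and by Remark \ref{esing} and the very definition of $E$-maximal contact the strict transform $V'$ still contains the relevant $E$-singular locus of the transform, so in particular $\xi'\in V'$. The crucial input, to be taken from \cite{part1}, is that the formation of the companion ideal, of the coefficient ideal along $E$, and of the junior ideal is compatible with the transformation and with restriction to $V'$: the dimension $(n-1)$ binomial basic object $(W_{n-1},(J_{n-1},c_n),H_{n-1},E_{n-1})$ attached to $(W,(J,c),H,E)$ transforms precisely into the dimension $(n-1)$ object attached to $(W',(J',c),H',E')$, and $Z\subset V$ is exactly the maximum locus $\EMaxB$ of the corresponding dimension $(n-1)$ resolution function. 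This reduces the desired inequality for the tail $(t_{n-1}(\xi),\ldots,t_1(\xi))$ to the same statement one dimension down, where the inductive hypothesis applies.

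It then remains to deal with the degenerate slots appearing in Definition \ref{Einv}. If $J_i=1$ at $\xi$, then $t_i(\xi)=\infty$ and all later components are $\infty$; since $\infty$ is the top value of the totally ordered set, the comparison between $t(\xi)$ and $t(\xi')$ is decided at the first index where either the ideal ceases to be trivial or a finite component strictly decreases, both of which are strict drops. If $\theta_i=0$, so that $t_i(\xi)=\Gamma(\xi)$ is governed by the monomial case, I would invoke the fact that $\Gamma$ drops strictly under blow up along $\EMaxB(\Gamma)$, see \cite{course}. The base of the induction is $\dim W=1$, where the statement is immediate since the controlled transform strictly lowers the $E$-order, together with the monomial case just discussed.

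The hard part is the compatibility claimed in the second paragraph: one must verify with care that the weak and controlled transforms, the factorization $J_i=M_i\cdot I_i$ into its monomial and non-monomial parts, the companion ideal, and the coefficient ideal along $E$ all commute with the transformation and with restriction to the strict transform of the maximal contact hypersurface, so that the $(n-1)$-dimensional object produced from the blown-up data genuinely coincides with the transform of the $(n-1)$-dimensional object produced from the original data. Once this bookkeeping is secured, the lexicographic drop is formal; I also expect that checking the transitions between types $(a)$, $(b)$ and $(c)$ of Definition \ref{Einv} --- that the appearance of a new finite component, or the switch to the monomial function $\Gamma$, is always counted as a decrease --- will require a short but somewhat delicate case analysis, especially near the boundary between the binomial regime ($\theta_i>0$) and the monomial regime ($\theta_i=0$).
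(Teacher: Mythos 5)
The first thing to note is that this paper does not prove Lemma \ref{bajainv} at all: it is stated in Section \ref{notdef} as part of the material recalled from Part I \cite{part1}, with no proof attached, so there is no in-paper argument to compare yours against. Your sketch follows the standard inductive scheme of constructive resolution (non-increase of the leading component, passage to a hypersurface of $E$-maximal contact, compatibility of companion, coefficient and junior ideals with permissible transformations, and the monomial function $\Gamma$ for the degenerate slot), which is surely the route intended in \cite{part1}.

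That said, two points keep your text at the level of an outline rather than a proof. First, your justification of the non-increase of the top component is off as stated: for the controlled transform $J'=I(Y')^{\theta-c}\cdot J^{\curlyvee}$ one has $\Eord_{\xi'}(J')=(\theta-c)+\Eord_{\xi'}(J^{\curlyvee})$ at $\xi'\in Y'$, which can very well exceed $\theta$; what the algorithm actually uses is the factorization $J'=M_n'\cdot I_n'$, in which the exceptional factor is absorbed into $M_n'$ and $I_n'$ is the weak transform of $I_n$, so the relevant inequality is $\Eord_{\xi'}(I_n^{\curlyvee})\leq\Eord_{\xi}(I_n)$ for $\xi\in Z\subset\Etop(I_n)$ (cf. Remark \ref{ies}), and this $E$-order analogue of the classical estimate must itself be established, in arbitrary characteristic, since the usual proofs use the ordinary order function. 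Second, the step you yourself call the hard part --- that the dimension $n-1$ object attached to $(W',(J',c),H',E')$ coincides with the transform of the dimension $n-1$ object attached to $(W,(J,c),H,E)$, i.e.\ commutation of the companion ideal, $\ECoeff$ and the junior ideal with the blow up and with restriction to the strict transform $V'$ --- is precisely the content that makes the lemma true, and you cite it from \cite{part1} rather than prove it; likewise the case analysis at the boundary $\theta_i=0$ between the binomial regime and the monomial function $\Gamma$ (types $(a)$, $(b)$, $(c)$ of Definition \ref{Einv}) is only announced, not carried out. Since the lemma is itself a quotation from Part I this is not a wrong approach, but be aware that neither the present paper nor your write-up actually carries that burden.
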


\subsection{$E$-resolution of BBOE: Algorithm} \label{algbin} 

The last technical construction that we need to remind is the algorithm of $E$-resolution of binomial basic objects along $E$, given in \cite{part1}.

\begin{theorem} {\bf $E$-resolution of binomial basic objects along $E$.} \label{resobb}
\medskip

An algorithm of $E$-resolution of binomial basic objects of dimension $n$ along a normal crossing divisor $E$ consist of:

\begin{itemize}
	\item[A)] A totally ordered set $(\mathcal{I}_n,\leq)$.
	\item[B)] For each BBOE $(W^{(0)},(J^{(0)},c),H^{(0)},E^{(0)})$ where $dim(W^{(0)})=n$, $J^{(0)}=M^{(0)}\cdot I^{(0)}$ and the ideal $I^{(0)}$ does not contain hyperbolic equations:
\begin{enumerate}
	\item Define an equivariant function $t^{(0)}: \ESing(J^{(0)},c)\rightarrow \mathcal{I}_n$ such that $$\EMaxB t^{(0)}\subset \ESing(J^{(0)},c)$$ is a permissible center for $(W^{(0)},(J^{(0)},c),H^{(0)},E^{(0)})$.  
	\item By induction, assume there exists an equivariant sequence of transformations of BBOE 
\begin{eqnarray}	
(W^{(0)},(J^{(0)},c),H^{(0)},E^{(0)}) \stackrel{\pi_1}{\longleftarrow}\ldots \hspace*{7cm}
\nonumber \\  \ldots\stackrel{\pi_{r-1}}{\longleftarrow} (W^{(r-1)},(J^{(r-1)},c),H^{(r-1)},E^{(r-1)})\stackrel{\pi_r}{\longleftarrow}(W^{(r)},(J^{(r)},c),H^{(r)},E^{(r)})
\label{seq}
\end{eqnarray}
along centers $Z^{(k)}\subset \ESing(J^{(k)},c)$ for $0\leq k\leq r-1$; and equivariant functions $$t^{(k)}: \ESing(J^{(k)},c)\rightarrow \mathcal{I}_n$$ for $0\leq k\leq r-1$, such that $Z^{(k)}=\EMaxB t^{(k)}$. 
	
If $\ESing(J^{(r)},c)\neq \emptyset$ this sequence of transformations can be extended. This means at the $r$-th stage of the $E$-resolution process an equivariant function can be defined $$t^{(r)}: \ESing(J^{(r)},c)\rightarrow \mathcal{I}_n$$ such that $Z^{(r)}=\EMaxB t^{(r)}$ is a permissible center for $(W^{(r)},(J^{(r)},c),H^{(r)},E^{(r)})$.
\end{enumerate}
	\item[C)] For some $r$, the previous sequence of transformations (\ref{seq}) is a $E$-resolution of the original BBOE $(W^{(0)},(J^{(0)},c),H^{(0)},E^{(0)})$, that is, $\ESing(J^{(r)},c)=\emptyset$.
\end{itemize}
\end{theorem}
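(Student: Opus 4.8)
The plan is to construct the three ingredients $A)$, $B)$, $C)$ by induction on $n=\dim W$, following the \emph{mobile} formalism of \cite{strong} transported to the $E$-setting of \cite{part1}, and then to read off termination from Lemma \ref{bajainv}. For $A)$ I would let $\mathcal{I}_n$ be the set of all $n$-tuples that can occur as values of the $E$-resolution function of Definition \ref{Einv} --- each entry a positive rational $\theta_i/c_{i+1}$, or a value $\Gamma(\xi)$ of the monomial resolution function of \cite{course}, or $\infty$ --- with the lexicographic order. The structural fact I will need in $C)$ is that along the $E$-resolution of one fixed BBOE the values of $t$ stay inside a well-founded subset of $\mathcal{I}_n$: in each slot the rational entry $\theta_i/c_{i+1}$ has denominator bounded in terms of the fixed $c$ and $n$ and is bounded above, the $\Gamma$-entries range over the well-founded value set of \cite{course}, and a finite lexicographic product of well-founded orders is well-founded.

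For $B)$ I would run the same construction at every stage $r$ with $\ESing(J^{(r)},c)\neq\emptyset$ --- giving $B)1$ when $r=0$ and $B)2$ in general. First, factor $J^{(r)}=M^{(r)}\cdot I^{(r)}$ as in Remark \ref{fac}, with $M^{(r)}$ supported on the exceptional divisors, and at a point $\xi$ where $\Eord_\xi(I^{(r)})=\theta_n$ is maximal form the companion ideal $P_n$ with respect to $c$, so that $\ESing(P_n,c_n)$ with $c_n=\max\ \Eord(P_n)$ governs the resolution on the stratum of $\xi$. Second, choose a hypersurface $V=W_{n-1}$ of $E$-maximal contact for $P_n$; by Remark \ref{esing} and \cite{part1} it may be taken of the form $\{x_i=0\}$, hence combinatorial, and pass to the junior ideal $J_{n-1}=\ECoeff_V(P_n)$, obtaining a BBOE of dimension $n-1$. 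Third, apply the dimension-$(n-1)$ algorithm (induction hypothesis; the base case is the monomial case of \cite{course}) to produce an equivariant function with values in $\mathcal{I}_{n-1}$ and a combinatorial center, and set $t^{(r)}$ by prepending $t_n=\theta_n/c$ when $\theta_n>0$, $t_n=\Gamma$ when $\theta_n=0$, or $t_n=\infty$ (completing with $\infty$'s) when $I^{(r)}$ is already resolved in dimension $n$. Finally, put $Z^{(r)}=\EMaxB\, t^{(r)}$: it is closed by upper semi-continuity of $t$, it lies in $\ESing(J^{(r)},c)$ since $t^{(r)}$ is defined only there, and, as recorded just after Definition \ref{Einv}, it is an intersection of coordinate hypersurfaces, hence regular and normal crossing with $H^{(r)}$, i.e.\ permissible. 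Equivariance of $t^{(r)}$ follows because the factorization, the companion ideal, the choice of $V$ among coordinate hyperplanes and $\ECoeff_V$ are all equivariant (Remark \ref{equivdef}); and the controlled transform of a binomial ideal along a combinatorial center is again binomial as in Definition \ref{jota}, so the construction can be repeated --- this is the extension statement in $B)2$.

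For $C)$ I would argue by well-founded induction on $\max\ t^{(r)}\in\mathcal{I}_n$: as long as $\ESing\neq\emptyset$, part $B)$ extends the sequence; by Lemma \ref{bajainv}, together with the fact that $t$ is unchanged away from the center and that $Z^{(r)}=\EMaxB\, t^{(r)}$, one gets $\max\ t^{(r+1)}<\max\ t^{(r)}$; and since the values traversed lie in a well-founded subset of $\mathcal{I}_n$, the strictly decreasing sequence $(\max\ t^{(r)})_r$ must be finite. Hence $\ESing(J^{(r)},c)=\emptyset$ for some $r$, so (\ref{seq}) is an $E$-resolution of the original BBOE. (Over the phase in which the leading entries of $t$ have become $\Gamma$-values, this last step is exactly the termination of the monomial algorithm of \cite{course}.)

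The step I expect to be the genuine obstacle is the part of $B)$ that verifies that $Z^{(r)}=\EMaxB\, t^{(r)}$ is permissible \emph{and} that the whole inductive package --- companion ideal, hypersurface of $E$-maximal contact, coefficient ideal $\ECoeff_V$ --- is compatible with blowing up that combinatorial center, so that $t$ is independent of the choices made and genuinely drops at each step; this is the $E$-analogue of the classical commutation of restriction and blowing-up in the Hironaka-Villamayor scheme, and most of it (existence of hypersurfaces of $E$-maximal contact, the behaviour of $\ECoeff_V$ under transformation, semi-continuity, and the drop Lemma \ref{bajainv}) is the technical content imported from \cite{part1}. The remaining points --- equivariance, the coordinate shape of the centers, and the well-foundedness underlying termination --- are comparatively routine once that machinery is available.
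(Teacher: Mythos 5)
Your proposal follows essentially the same route as the paper: Theorem \ref{resobb} is not actually proved in this article but is recalled from \cite{part1}, and the construction you outline --- factorization $J_i=M_i\cdot I_i$, companion ideal, hypersurface of $E$-maximal contact of coordinate form, junior ideal via $\ECoeff$, induction on dimension with the lexicographically ordered function $t$ of Definition \ref{Einv}, permissibility and equivariance of $\EMaxB t$, and termination from the drop in Lemma \ref{bajainv} --- is precisely the machinery from Part I that Section \ref{notdef} summarizes. The items you explicitly defer (well-definedness and upper semicontinuity of $t$, stability of $E$-maximal contact and of the coefficient ideal under transformations, and the drop lemma) are indeed where the real content lies, and they are established in \cite{part1} rather than here, so your sketch is consistent with the paper's treatment; only your termination remark is stated a bit loosely, since the bound on denominators of the entries $\theta_i/c_{i+1}$ comes from the initial maximal $E$-orders (which never increase along the process) rather than from $c$ and $n$ alone.
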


\begin{remark} Remind that running this algorithm \ref{resobb} of $E$-resolution of a BBOE we only modify the singular points included in the $E$-singular locus. 
\end{remark}

Finally, we recall the main properties of this algorithm \ref{resobb}.

\begin{proposition} \label{propis} 
Fix a BBOE $(W,(J,c),H,E)$ and a $E$-resolution of this BBOE given by theorem \ref{resobb}. This means  $\ESing(J^{(r)},c)=\emptyset$ for some $r\in\mathbb{N}$, $r>0$.
\begin{enumerate}
	\item If $\xi\in \ESing(J^{(k)},c)$ for $0\leq k\leq r-1$, and $\xi\notin Z^{(k)}$ then  $t^{(k)}(\xi)=t^{(k+1)}(\xi')$ where $\pi_{k+1}(\xi')=\xi$. That is, it is possible to identify the points in the $E$-singular loci $$\ESing(J^{(0)},c),\ldots,\ESing(J^{(k)},c)$$ and outside the centers $Z^{(0)},\ldots,Z^{(k)},$ with their corresponding transforms in the $E$-singular locus $\ESing(J^{(k+1)},c)$. 
	\item The $E$-resolution is achieved by means of transformations along centers $\EMaxB t^{(k)}$ for $0\leq k\leq r-1$. The $E$-resolution function $t$ drops after each one of these transformations $$max\ t^{(0)}>max\ t^{(1)}>\ldots >max\ t^{(r-1)}.$$
	\item For all $0\leq k\leq r-1$, the closed set $\EMaxB t^{(k)}$ is equidimensional and regular and its dimension is determined by the value $max\ t^{(k)}$.
\end{enumerate}
\end{proposition}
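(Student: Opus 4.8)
The plan is to deduce all three assertions from three ingredients that are already available: the local, isomorphism‑invariant nature of the $E$-resolution function $t$ of Definition \ref{Einv}; the fact that the blow-up of a combinatorial center is an isomorphism away from that center; and Lemma \ref{bajainv} together with the description $\EMaxB(t)=\bigcap_{i\in\mathcal{I}}\{x_i=0\}$ with $\mathcal{I}\subseteq\{1,\dots,n\}$ of the centers produced by the algorithm of Theorem \ref{resobb}. No new property of $\Eord$ is needed; the work lies entirely in tracking how $t$ and the $\ESing$ loci transform under the maps $\pi_{k+1}$.

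First I would prove (1). Fix $\xi\in\ESing(J^{(k)},c)$ with $\xi\notin Z^{(k)}$. Since $Z^{(k)}$ is a (closed, combinatorial) center, $\pi_{k+1}$ restricts to an isomorphism $W^{(k+1)}\setminus Y'\xrightarrow{\ \sim\ }W^{(k)}\setminus Z^{(k)}$, so $\xi$ has a unique preimage $\xi'$ and $\xi'\notin Y'$. Near $\xi'$ one has $I(Y')=\mathcal{O}_{W^{(k+1)}}$, so the controlled transform $J^{(k+1)}=I(Y')^{\theta-c}\cdot(J^{(k)})^{\curlyvee}$ is carried onto $J^{(k)}$ by this isomorphism, and the new exceptional component added to $E^{(k)}$ and to $H^{(k)}$ does not pass through $\xi'$, so $E^{(k+1)}$ and $H^{(k+1)}$ agree with $E^{(k)}$ and $H^{(k)}$ there. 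As every ingredient of $t$ — the local flags $W_i$, the factorization $J_i=M_i\cdot I_i$, the companion ideals $P_i$, the coefficient ideals $\ECoeff_V$, the junior ideals $J_{i-1}$, and the monomial function $\Gamma$ — is defined locally and is compatible with isomorphisms of binomial basic objects along $E$, we obtain $t^{(k)}(\xi)=t^{(k+1)}(\xi')$. The same isomorphism shows that a point of $\ESing(J^{(k+1)},c)$ lying off $Y'$ maps to a point of $\ESing(J^{(k)},c)\setminus Z^{(k)}$; composing these identifications for $0\le k\le r-1$ yields the asserted matching of the successive $\ESing$ strata.

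Part (2) is then almost formal. By construction of the algorithm (Theorem \ref{resobb}) the center at the $k$-th stage is $Z^{(k)}=\EMaxB t^{(k)}$. For the strict drop, take any $\xi'\in\ESing(J^{(k+1)},c)$. If $\pi_{k+1}(\xi')=\xi\in\EMaxB t^{(k)}$ then $\xi'\in Y'$, and since $J^{(k)}\neq 1$, Lemma \ref{bajainv} gives $t^{(k+1)}(\xi')<t^{(k)}(\xi)=\max t^{(k)}$. Otherwise $\xi'$ lies off $Y'$, so by (1) $t^{(k+1)}(\xi')=t^{(k)}(\pi_{k+1}(\xi'))$ with $\pi_{k+1}(\xi')\in\ESing(J^{(k)},c)\setminus\EMaxB t^{(k)}$, whence $t^{(k+1)}(\xi')<\max t^{(k)}$. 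Since $\EMaxB t^{(k+1)}$ is nonempty and closed, $\max t^{(k+1)}$ is attained and, by the above, $\max t^{(k+1)}<\max t^{(k)}$; iterating over $k$ gives the chain $\max t^{(0)}>\cdots>\max t^{(r-1)}$.

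Finally I would treat (3), which I expect to be the main obstacle. That $\EMaxB t^{(k)}$ is regular and equidimensional is immediate from its description $\bigcap_{i\in\mathcal{I}}\{x_i=0\}$ as a transverse intersection of coordinate hypersurfaces, of codimension $|\mathcal{I}|$. The delicate point is that $|\mathcal{I}|$ is a function of $\max t^{(k)}$ alone. For this one unwinds Definition \ref{Einv}: the value $\max t^{(k)}=(t_n,t_{n-1},\dots)$ is of one of the types $(a)$, $(b)$, $(c)$, and reading off the position at which the first $\infty$ appears (respectively, where the component $\Gamma$ appears, respectively, that all components are finite) pins down the dimension $i$ at which the monomial case is reached and the center is produced; in that dimension the center is the combinatorial center attached to the monomial algorithm, which — exactly as in \cite{course} — is determined by the value of $\Gamma$ alone, and its codimension in $W_i$, hence in $W$, is a function of that value, i.e. of the corresponding component of $\max t^{(k)}$. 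The care here is the bookkeeping relating each coordinate of $\max t^{(k)}$ to its contribution to $\dim\EMaxB t^{(k)}$, and checking that the monomial-case center does not grow when pulled back through the intermediate maximal-contact hypersurfaces $W_i\subset\cdots\subset W_n$; both are routine once the structure of $t$ and of the monomial algorithm from \cite{part1} and \cite{course} are in hand.
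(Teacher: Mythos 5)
A preliminary remark on the comparison: the paper gives no proof of Proposition \ref{propis} at all --- it is stated under ``we recall the main properties of this algorithm \ref{resobb}'' and the proof lives in Part I \cite{part1} --- so your argument can only be measured against the ingredients the present paper supplies (Remark \ref{localinv}, Lemma \ref{bajainv}, the chart description $\EMaxB(t)=\cap_{i\in\mathcal{I}}\{x_i=0\}$) and the standard constructive-resolution proofs that Part I adapts. On that basis, your parts (1) and (2) are correct and are the expected argument: off the center the blow-up is an isomorphism, $I(Y')$ is the unit ideal there, so the controlled transform and the data $E$, $H$, the flags, the factorizations $J_i=M_i\cdot I_i$, companion, coefficient and junior ideals are all carried over by that isomorphism, whence $t^{(k)}(\xi)=t^{(k+1)}(\xi')$; and the strict inequality $\max t^{(k+1)}<\max t^{(k)}$ follows by splitting $\ESing(J^{(k+1)},c)$ into the points lying over $Z^{(k)}$ (Lemma \ref{bajainv}) and the points off $Y'$ (part (1) together with the fact that $Z^{(k)}$ is exactly the locus where $t^{(k)}$ attains its maximum).

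Part (3) is where your text is a plan rather than a proof. Regularity and equidimensionality \emph{inside one chart} do follow from $Z^{(k)}=\cap_{i\in\mathcal{I}}\{x_i=0\}$, but equidimensionality of the global closed set $\EMaxB t^{(k)}$, and the claim that its dimension is determined by $\max t^{(k)}$, are precisely the assertion that $|\mathcal{I}|$ depends only on the value of the function, and this is the point you defer to ``routine bookkeeping''. Moreover, the sketch you give of that bookkeeping mixes the cases of Definition \ref{Einv}: in type $(a)$ the trailing $\infty$'s come from a junior ideal $J_i=1$, and the center is then locally the maximal contact subvariety $W_i$ of the flag itself, its dimension read off from the position of the first $\infty$; the function $\Gamma$ and the monomial-case center only enter in type $(b)$, where $\theta_i=0$ and the codimension inside $W_i$ must be extracted from the value $\Gamma(\xi)$ as in \cite{course}; type $(c)$ needs its own (short) discussion. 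Carrying out, by the same descending induction that defines $t$, the verification that in each of the three cases the codimension is a function of the value alone --- and hence is the same on every chart and every component where the maximum is attained --- is the actual mathematical content of (3); as written, you have asserted it rather than proved it.
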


\section{Log-resolution of binomial ideals} \label{logresbin}

\subsection{Locally monomial resolution of a binomial ideal} \label{sec37} 

\begin{parag} \label{parrafo1}
Let $(W^{(0)},(J^{(0)},c),H^{(0)},E^{(0)})$ be a BBOE. By algorithm \ref{resobb} there exists an index $r$ such that $\ESing(J^{(r)},c)=\emptyset$ where $J^{(r)}=J_n^{(r)}=M_n^{(r)}\cdot I_n^{(r)}$.

If $\Eord(I_n^{(r)})=0$ and $I_n^{(r)}=1$ the resolution process is finished. But if $I_n^{(r)}\neq 1$ then it is necessary to modify the part of the singular locus included in the hyperbolic hypersurfaces which contain $V(I_n^{(r)})$.
\medskip 

Let $\{g_1,\ldots,g_m\}$ be the reduced Gr${\rm \ddot{o}}$bner basis of $J^{(0)}$ in $W^{(0)}$. Let $\xi\in W^{(r)}$ be a point. In a neighborhood of $\xi$, set $I_n^{(r)}=<f_1,\ldots,f_m>$ where $f_j$ comes from the transforms of the generator $g_j$ of the Gr${\rm \ddot{o}}$bner basis of $J^{(0)}$ by the sequence of blow ups. At  $W^{(r)}$ construct the ideal 
$$\tilde{I}_n^{(r)}=(Nhyp(I_n^{(r)}))_y=(<\{f_i\}_{i=1,\ldots,m}/\Eord_{\xi}(f_i)\neq 0\ \forall\ \xi\in W^{(r)}>)_y\subset K[x,y]_y$$

where $Nhyp(I_n^{(r)})$ denotes the ideal generated by the non hyperbolic generators of $I_n^{(r)}$.

Before passing to the localization, it is necessary to rewrite as $y$ the variables $x$ appearing in the hyperbolic generators of $I_n^{(r)}$. 

If $\tilde{I}_n^{(r)}\neq 0$ then by construction $\Eord_{\xi}(\tilde{I}_n^{(r)})>0$ for all $\xi\in W^{(r)}$. Now resolve the binomial pair $(\tilde{I}_n^{(r)},c)$, where $c$ is the corresponding critical value.  
\end{parag}

\begin{remark} The construction of the ideal $\tilde{I}$ depends on the choice of the system of generators of $I$. This is because it is necessary to fix a Gr${\rm \ddot{o}}$bner basis (or any other system of generators) of $J^{(0)}$ from the beginning of the resolution process.
\end{remark}

\begin{example} Let $I=<1-x_1,x_2-x_3^2>\subset K[x_1,x_2,x_3]$ be a binomial ideal, $char(K)=0$. Let $\{1-x_1,x_2-x_3^2\}$ and $\{1-x_1,x_2-x_3^2x_1\}$ be two systems of generators of $I$.

Note that $<x_2-x_3^2>\neq <x_2-x_3^2y_1>$ in $K[y_1,x_2,x_3]_{y_1}$.
\end{example}

\begin{algorithm} {\bf Locally monomial resolution.} \label{combalg}
\medskip
	
Let $J\subset \mathcal{O}_W$ be a binomial ideal without hyperbolic equations, with respect to a normal crossing divisor $E$. Fix a reduced Gr$\ddot{\rm o}$bner basis of $J$. Consider $J=M\cdot I$. At the beginning $\mathcal{O}_W=K[x]$, $E=\{V(x_1),\ldots,V(x_n)\}$ and $J=I$.

Consider the BBOE $(W,(J,c),H,E)$, where $H$ is the set of exceptional hypersurfaces. At the beginning $H=\emptyset$.
\begin{enumerate}
	\item Apply the algorithm \ref{resobb} to $(W,(J,c),H,E)$, where $c=\max \Eord(J)>0$. Obtain $J'=M'\cdot I'$ with $\ESing(J',c)=\emptyset$.  
\item If $\max \Eord(I')=0$
	\begin{itemize}
	\item If $I'=1$ finish. $J'$ principal.
	\item If $I'\neq 1$ take $\tilde{I}$ in $K[x,y]_y$. 
	    \begin{itemize}
	    \item If $\tilde{I}\neq 0$ take $J=\tilde{I}$ and go to step $1$.
	    \item If $\tilde{I}=0$ finish. The ideal $I'$ is given only by hyperbolic equations.
	    \end{itemize}	
\end{itemize}
\item If $\max \Eord(I')>0$ take $J=J'$ and go to step $1$. 
\end{enumerate}
\end{algorithm}

\begin{remark} Step (1) of algorithm \ref{combalg} means modify the singular points in $\ESing(J,c)$. \\
During this step the number of variables $y_i$ does not increase, since the ideal $\tilde{I}$ is cons\-tructed when the $E$-singular locus is empty.
\end{remark}

\begin{remark} \label{localinv}
Let $(W,(J,c),H,E)$ be a binomial basic object along $E$. By construction, the value of the function $t$ at a point $\xi$ of the $E$-singular locus $\ESing(J,c)$ only depends on the point $\xi$. 

Notice that the value of the function $t$ at any point does not depend on the Gr$\ddot{\rm o}$bner basis of the ideal $J$ fixed at the beginning of the $E$-resolution process. This is because the $E$-order of an ideal $I_i$ is independent of the selected set of generators of $I_i$.
\end{remark}

\begin{proposition} \label{Igorro} 
The next combinatorial center to be blown up defined by $\tilde{I}_n^{(r)}$ (\ref{parrafo1}) at $W^{(r)}$ is compatible with the centers defined at other charts in $\textbf{W}$. 
\end{proposition}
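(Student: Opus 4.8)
The plan is to trace through the construction of $\tilde I_n^{(r)}$ from Section~\ref{parrafo1} and check that each ingredient is chart‑independent, so that the center $\EMaxB(t)$ associated to the binomial pair $(\tilde I_n^{(r)},c)$ glues. The three potential sources of chart dependence are: (i) the localization at the $y$‑variables, i.e. which variables are declared hyperbolic; (ii) the chosen system of generators of the ideal, which by the remark after \ref{parrafo1} does affect $\tilde I$; and (iii) the combinatorial center itself, which is read off from the $E$‑resolution function $t$. I would treat these in turn.

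First I would recall that the whole resolution process starts from a fixed reduced Gröbner basis $\{g_1,\dots,g_m\}$ of the global ideal $J^{(0)}$ on $\textbf{W}^{(0)}$, and that in every chart the generators $f_j$ of $I_n^{(r)}$ are, by construction, the transforms of this one global $g_j$ under the (intrinsic, chart‑independent) sequence of blow ups along the centers $Z^{(0)},\dots,Z^{(r-1)}$. Hence the labelling ``$f_j$ is hyperbolic'' — meaning $\Eord_\xi(f_j)=0$ for all $\xi$ in the chart — is determined by the global datum $g_j$ together with the intrinsic geometry of the blow‑up sequence, not by the chart. Concretely, a variable $x_i$ gets renamed $y_i$ precisely when $V(x_i)$ is one of the exceptional hyperbolic components created along the way, which is a global statement about $E^{(r)}$. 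Thus $Nhyp(I_n^{(r)})$ and the rewriting of the remaining $x$'s as $y$'s agree on overlaps of charts, so $\tilde I_n^{(r)}$ is a well‑defined coherent sheaf of binomial ideals on the open subset of $\textbf{W}^{(r)}$ where it is nonzero; and by \ref{parrafo1} its $E$‑order is everywhere positive there, so $(\tilde I_n^{(r)},c)$ is a genuine binomial pair.

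Next, since $(\tilde I_n^{(r)},c)$ is a globally defined binomial pair, Theorem~\ref{resobb} applies to it and produces an equivariant $E$‑resolution function $t$ whose $\EMaxB$ is, by the corollary and remark following the definition of the $E$‑resolution function, of the form $\cap_{i\in\mathcal I}\{x_i=0\}$ for some $\mathcal I\subseteq\{1,\dots,n\}$. The key point I would invoke is Remark~\ref{localinv}: the value $t(\xi)$ depends only on the point $\xi$ and is independent of the chosen Gröbner basis, because each $\theta_i=\Eord_\xi(I_i)$ and each critical value is computed from $E$‑orders of ideals, which do not see the generators. Consequently $t$, and hence $\EMaxB(t)$, is the same function when computed in any chart containing $\xi$; restricting to an overlap $U_i\cap U_j$ and applying Proposition~\ref{usc} and Lemma~\ref{bajainv} shows the two locally defined centers restrict to the same closed set there. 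This is exactly what is needed for the centers to patch to a global combinatorial center $Z\subset\textbf{W}^{(r)}$, giving the claim.

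The main obstacle I expect is item (i) above, reconciling localizations: the ideal $\tilde I$ literally lives in different localized rings $K[x,y]_y$ in different charts, so ``agreement on overlaps'' has to be phrased carefully — one must check that on $U_i\cap U_j$ the two sets of ``$y$‑variables'' coincide after the rewriting step of \ref{parrafo1}, and that $\tilde I$ restricted from each side gives the same ideal in the common localization. This is where the fact that the $f_j$ descend from a single global Gröbner basis, together with the intrinsic characterization of which $V(x_i)$ are hyperbolic exceptional components, does the real work; once that is pinned down, equivariance of $t$ (Remark~\ref{equivdef}) and Remark~\ref{localinv} make the gluing of the center essentially formal.
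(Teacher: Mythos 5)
Your argument rests on the assertion that the labelling of a generator as hyperbolic, and hence $Nhyp(I_n^{(r)})$, is chart-independent, so that $\tilde I_n^{(r)}$ glues into a well-defined coherent sheaf of binomial ideals to which Theorem~\ref{resobb} can be applied globally. That assertion is precisely the point at issue, and it fails in general: a generator is erased in a chart when its $E$-order vanishes at every point \emph{of that chart} (equivalently $\max \Eord = 0$ there), which is a chart-wide condition. The transform of the very same Gr\"obner-basis generator can have $\max \Eord > 0$ on an overlapping chart $W^{(m)}_k$ while being hyperbolic on $W^{(r)}_i$, so the two charts keep different sets of generators; the paper records exactly this in the Claim following Proposition~\ref{Igorro} (``the ideal $\tilde I_n^{(r)}$ can be non well defined in the intersection of two charts''). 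Because of this, your subsequent appeal to Remark~\ref{localinv}, equivariance and upper semi-continuity presupposes a globally defined pair $(\tilde I_n^{(r)},c)$ that has not been established, so the gluing of the centers does not become ``essentially formal''.

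What the paper actually proves is a chart-by-chart compatibility statement. It first orders the charts (charts with nonempty $E$-singular locus, or with a different or still reducible critical value, are handled at other stages), reducing to two charts $W^{(r)}_i$, $W^{(m)}_k$ both in the situation of \ref{parrafo1}. Then, using that the generators in both charts are transforms of the same fixed Gr\"obner basis, it compares them on $U_{i,k}=W^{(r)}_i\cap W^{(m)}_k$: either the generator kept in one chart has $E$-order zero at every point of the overlap (case (1)), or it is hyperbolic in both charts and erased on both sides (case (2)); and on the overlap one checks $\widetilde{\tilde I_n^{(r)}\big|_{U_{i,k}}}\equiv\widetilde{\tilde I_n^{(m)}\big|_{U_{i,k}}}$, i.e.\ after erasing hyperbolic generators once more the two ideals coincide, which is what makes the locally defined combinatorial centers compatible. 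Your proposal identifies the right ingredients (the fixed global Gr\"obner basis, locality of $t$), but it skips this verification, which is the actual content of the proof; as written it contains a genuine gap.
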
            

\begin{proof} Without loss of generality we can assume $r$ is the first stage of the $E$-resolution process where it is necessary to define an ideal $\tilde{I}$. Denote by $x_1',\ldots,x_n'$ the variables after blowing up.

At some affine chart $W^{(r)}_i$, let assume the ideal $J^{(r)}=J_n^{(r)}=M^{(r)}\cdot I^{(r)}$ can be written as  $J^{(r)}=<f_{i,1}^{(r)},\ldots,f_{i,s}^{(r)}>$ where each $f_{i,j}^{(r)}=M^{(r)}\cdot g_{i,j}^{(r)}$. By hypothesis, there exists some $g_{i,j}^{(r)}$ which is a hyperbolic equation and $\tilde{I}_n^{(r)}\neq 0$. 

Any other chart with different conditions will be treated before or after $W^{(r)}_i$:
\begin{itemize}
  \item At some affine chart $W^{(l)}_j$, where $\ESing(J^{(l)},c_l)\neq\emptyset$ we consider the center determined by the $E$-resolution function.  	
	\item At some affine chart $W^{(t)}_j$, where $\ESing(J^{(t)},c_t)=\emptyset$ 
\begin{itemize}
	\item and $c_t\!\neq\!c$. If $c_t\!>\!c$ then $W^{(t)}_j$ is the next chart to be considered, otherwise, consider $W^{(r)}_i$.	
	\item and $c_t=c$. If $\Eord(I_n^{(t)})>0$ then at this chart it is still possible to drop the value of $c_t$. If $\Eord(I_n^{(t)})=0$ and $I_n^{(t)}=1$ the resolution process is already finished at this chart. 
	
If $\Eord(I_n^{(t)})=0$, $I_n^{(t)}\neq 1$ and $\tilde{I}_n^{(t)}=0$ the $E$-resolution process is finished at this chart.
\end{itemize}
\end{itemize}

So it is enough to check the assumption in the case of having two such charts. That is, there exists another affine chart $W^{(m)}_k$ such that $\ESing(J^{(m)},c)=\emptyset$, $\Eord(I_n^{(m)})=0$, $I_n^{(m)}\neq 1$ and $\tilde{I}_n^{(m)}\neq 0$. Under these conditions, the ideal $J^{(m)}=M^{(m)}\cdot I^{(m)}$ can be written $J^{(m)}=<f_{k,1}^{(m)},\ldots,f_{k,s}^{(m)}>$ where each $f_{k,j}^{(m)}=M^{(m)}\cdot g_{k,j}^{(m)}$. Some $g_{k,j}^{(m)}$ is a hyperbolic equation and $\tilde{I}_n^{(m)}\neq 0$. 

Suppose $g_{i,1}^{(r)}$ is a hyperbolic equation in $W^{(r)}_i$, then $\max \Eord(g_{i,1}^{(r)})=0$. The following diagram is commutative:
\vspace*{-0.5cm}

$$\begin{array}{cclc} & & W^{(r)}_i=Spec(K[x_1'\ldots,x_n']_{\{x_l'| l\in B_i\}}) & B_i\subset \{1,\ldots,n\} \\ & \swarrow & \hspace*{1cm} \uparrow & \\ W=Spec(K[x_1\ldots,x_n]) & & W^{(r)}_i\cap W^{(m)}_k=Spec(K[x_1'\ldots,x_n']_{\{x_l'| l\in B\}}) &  B\supset B_i\cup B_k \\ & \nwarrow & \hspace*{1cm} \downarrow & \\ & & W^{(m)}_k=Spec(K[x_1'\ldots,x_n']_{\{x_l'| l\in B_k\}}) & B_k\subset \{1,\ldots,n\} \\
\end{array}  \begin{array}{c} g_{i,1}^{(r)} \\ \\ \updownarrow \\ \\ g_{k,1}^{(m)} \end{array}$$

\begin{itemize}
	\item[(1)] If $\max \Eord(g_{k,1}^{(m)})>0$ in $W^{(m)}_k$, look to the points in the intersection $W^{(r)}_i\cap W^{(m)}_k$.
	
At each point of $W^{(r)}_i\cap W^{(m)}_k$, $\Eord(g_{k,1}^{(m)})=0$ since $g_{k,1}^{(m)}$ is the first generator of $I_n^{(m)}$ and its $E$-order is zero in $W^{(r)}_i$. 

  \item[(2)] If $\max \Eord(g_{k,1}^{(m)})=0$ then we construct $\tilde{I}_n^{(r)}$ and $\tilde{I}_n^{(m)}$ at both charts, erasing the first generator $g_{i,1}^{(r)}$ respectively $g_{k,1}^{(m)}$. Continue the argument with the remaining generators.
\end{itemize}
\medskip

If $r$ is not the first stage of the $E$-resolution process where it is necessary to define an ideal $\tilde{I}$, then the ideals $I_n^{(r)}$ and $I_n^{(m)}$ can have different number of generators. 
But in this situation, there was a hyperbolic generator at a previous stage of the $E$-resolution process. Therefore, as above, at each point of the intersection of these charts both generators are erased.
\end{proof}

\begin{claim} The ideal $\tilde{I}_n^{(r)}$ can be non well defined in the intersection of two charts.
\end{claim}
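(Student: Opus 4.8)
The plan is to prove the Claim by exhibiting the phenomenon explicitly, since it is a negative statement. Recall from \ref{parrafo1} that $\tilde I_n^{(r)}$ is obtained from $I_n^{(r)}$ by keeping only the generators whose $E$-order is positive at \emph{every} point of the chart $W^{(r)}$ (i.e.\ by discarding the hyperbolic ones) and then localizing at the $y$-variables. The key observation is that this selection is not intrinsic: whether a generator of $I_n^{(r)}$ is hyperbolic, or has $E$-order $0$ somewhere, can change when one passes from one affine chart to an overlapping one. Hence a generator removed in the formation of $\tilde I$ in one chart may survive in the analogous construction on a neighbouring chart, and the two localized ideals then already disagree on the common open set.

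Concretely, I would reuse configuration (1) from the proof of Proposition \ref{Igorro}. There one has two charts $W^{(r)}_i$, $W^{(m)}_k$ whose generators are matched through the common refinement $W^{(r)}_i\cap W^{(m)}_k$, together with a generator $g$ such that $\max\Eord(g)=0$ on $W^{(r)}_i$ (so $g$ is hyperbolic there and is dropped when forming $\tilde I_n^{(r)}$) while $\max\Eord(g)>0$ on $W^{(m)}_k$ (so $g\in Nhyp(I_n^{(m)})$ and is kept when forming $\tilde I_n^{(m)}$). Pulling both ideals back to $W^{(r)}_i\cap W^{(m)}_k$ and localizing at the $y$-variables, $\tilde I_n^{(r)}$ is generated by the images of the remaining generators, whereas $\tilde I_n^{(m)}$ is generated by those together with the image of $g$. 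In general the image of $g$ does not lie in the ideal generated by the others — this is exactly the type of discrepancy illustrated in the Example, where $\langle x_2-x_3^2\rangle\neq\langle x_2-x_3^2 y_1\rangle$ in $K[y_1,x_2,x_3]_{y_1}$ — so $\tilde I_n^{(r)}$ and $\tilde I_n^{(m)}$ do not agree on the overlap, and therefore $\tilde I_n^{(r)}$ does not glue to a sheaf of ideals on $\textbf{W}$.

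To make this rigorous I would write down the smallest instance realizing configuration (1): start from a suitable binomial ideal in $\mathbb A^2_K$ or $\mathbb A^3_K$ chosen so that, after one or two combinatorial blow-ups, one chart produces a generator of the shape $1-\mu y^{\delta}$ (after rewriting the relevant $x$-variables as $y$) while in a neighbouring chart the same generator keeps a monomial factor and has positive $E$-order, and then check directly that the two localized ideals restrict to distinct ideals on the intersection. It should be emphasised that this does not contradict Proposition \ref{Igorro}: on the overlap the surviving generator $g$ has $E$-order $0$ everywhere, hence contributes nothing to $\ESing(\tilde I,c)$ there, so the object actually used by Algorithm \ref{combalg} — the center $\EMaxB t$ — remains globally consistent even though $\tilde I$ itself is only a chart-wise construction.

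The main obstacle is producing an honest example, because localization is forgiving: after inverting the $y$-variables one must still verify that the surviving generator genuinely is not a combination of the others in $K[x',y']_{y'}$, and one must simultaneously keep the accompanying data consistent — the critical value $c$, the factorization $J=M\cdot I$, and the identification of generators across the two charts via the common refinement — so that the two charts in question really co-occur in a single run of the algorithm.
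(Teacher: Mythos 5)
Your proposal is correct and follows essentially the same route as the paper: the paper gives no formal proof of this claim, but justifies it by exactly the mechanism you describe --- configuration (1) in the proof of Proposition \ref{Igorro}, where a matched generator is hyperbolic in one chart while having positive maximal $E$-order in an overlapping chart, together with the earlier example $\langle x_2-x_3^2\rangle\neq\langle x_2-x_3^2y_1\rangle$ in $K[y_1,x_2,x_3]_{y_1}$ --- before passing to the remedy of erasing hyperbolic generators again after restricting to the overlap. Your remark that this does not contradict Proposition \ref{Igorro} also matches the paper's discussion, and the explicit numerical instance you defer is likewise not carried out in the paper.
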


\begin{parag} In such case, to construct $\tilde{I}_n^{(r)}$ and $\tilde{I}_n^{(m)}$, at each chart erase the hyperbolic generators of $I_n^{(r)}$ and $I_n^{(m)}$, respectively.  

In the intersection $U_{i,k}=W^{(r)}_i\cap W^{(m)}_k$ we must consider $\widetilde{\tilde{I}_n^{(r)}\big|_{U_{i,k}}} \text{ and } \widetilde{\tilde{I}_n^{(m)}\big|_{U_{i,k}}}$. That is, erase the hyperbolic generators at each chart, and then consider the ideals restricted to $W^{(r)}_i\cap W^{(m)}_k$. Erase the hyperbolic generators in $\tilde{I}_n^{(r)}\big|_{U_{i,k}}$ and in $\tilde{I}_n^{(m)}\big|_{U_{i,k}}$. Note that in these two steps the same generators are erased in both ideals $I_n^{(r)}$ and $I_n^{(m)}$. Hence $$\widetilde{\tilde{I}_n^{(r)}\big|_{U_{i,k}}}\equiv\widetilde{\tilde{I}_n^{(m)}\big|_{U_{i,k}}}.$$
\end{parag}

\begin{remark} Remind that the \emph{top locus} of an upper semi-continuous function $t$ in $W$ is the reduced closed subscheme of $W$ where $t$ reaches its maximum value, that is, $top(t)=\{\xi\in W\ |\ t(\xi)=\max\ t\}.$  \\  Let $\mathcal{J}$ be a coherent ideal sheaf in $W$, we denote
\begin{itemize}
	\item[-] the set $\Etop(\mathcal{J})=top(\Eord(\mathcal{J}))$ is said to be the \emph{E-top locus} of $\mathcal{J}$.
	\item[-] Let $c$ be a positive integer number, $\Etop(\mathcal{J},c)=\{\xi\in W\ |\ \Eord_{\xi}(\mathcal{J})\geq c\}.$
\end{itemize}
\end{remark}

\begin{remark}
Defining this ideal $\tilde{I_n}$ and considering $J_n=\tilde{I_n}$ in the algorithm \ref{combalg} the following sequence of inclusions is achieved 
$$ Z\subset \cdots \subset \Etop(P_i)\subset \Etop(J_i,c_{i+1})\subset \Etop(P_{i+1})\subset \cdots \subset \Etop(P_n)\subset \Etop(\tilde{I_n},\tilde{c}_{n+1})$$ where $\tilde{c}_{n+1}$ is the suitable critical value, and $P_i$ are the \emph{companion ideals}, see \cite{part1} for details. It holds $\Etop(J_i,c_{i+1})\subset \Etop(P_{i+1})$ since $J_i=\ECoeff(P_{i+1})$.

When $\max \Eord(I_i)=0$ in dimension $i<n$, this chain is not achieved since $\Etop(J_i,c_{i+1})=\Etop(M_i,c_{i+1})$. This is the reason to use $\Gamma$ function. In that case $$Z\subset \Etop(J_i,c_{i+1})\subset \Etop(P_{i+1})\subset \cdots .$$
\end{remark}

\begin{remark} \label{ies} As a consequence $Z\subset \Etop(I_i)$ for all $1\leq i\leq n$, since by construction of the companion ideal $\Etop(P_i)\subseteq \Etop(I_i)$.
\end{remark}

The algorithm \ref{combalg} provides a \emph{locally monomial resolution}, this means the output is a locally monomial ideal. 

\begin{definition} \label{reslocmon}
Let $J\subset\mathcal{O}_W$ be a binomial ideal as in definition \ref{jota}. A \emph{locally monomial resolution} of $J$ is a sequence of blow ups along combinatorial centers $Z^{(k)}$ $$\begin{array}{cccccccc}
(W,H) & \xleftarrow{\ \Pi_{1}\ } & (W^{(1)},H^{(1)}) & \xleftarrow{\ \Pi_{2}\ } & \cdots &
\xleftarrow{\ \Pi_{N}\ } & (W^{(N)},H^{(N)}) \\ J &  & J\mathcal{O}_{W^{(1)}} &  & \cdots & & J\mathcal{O}_{W^{(N)}}
\end{array} \vspace*{-0.3cm}$$ such that 
\begin{itemize}
	\item each center $Z^{(k)}$ has normal crossings with the exceptional divisors $H^{(k)}=\{H_1,\ldots,H_k\}$. In fact, $Z^{(k)}=\cap_{i\in \mathcal{I}}H_i$ where $\mathcal{I}\subseteq \{1,\ldots,k\}$,
	\item the total transform of $J$ at (each affine chart of) $W^{(N)}$ is of the form 
\begin{equation} \label{totaljota} 	J\mathcal{O}_{W^{(N)}}=<\texttt{M}_1\cdot(1-\mu_1y^{\delta_1}),\texttt{M}_2\cdot(1-\mu_2y^{\delta_2}),\ldots,\texttt{M}_r\cdot(1-\mu_ry^{\delta_r}),\epsilon\cdot\texttt{M}_{r+1}> \subset K[x,y]_y
\end{equation} 
with $\texttt{M}_i=I(H_1)^{b_{i_1}}\cdot \ldots \cdot I(H_N)^{b_{i_N}}$ for $1\leq i\leq r+1$ where $H^{(N)}=\{H_1,\ldots,H_N\}$, $b_j\in \mathbb{N}$ for all $1\leq j\leq N$, and $\mu_i\in K$, $\delta_i\in \mathbb{Z}^n$ for $1\leq i\leq r$. And where $\epsilon=1$ if some generator of $J$ is a monomial, and $\epsilon=0$ otherwise. 
\end{itemize}
\end{definition}

\begin{parag} It is necessary to check that the ideal (\ref{totaljota}) is locally a monomial ideal with respect to a regular system of parameters.

Assume $K[x,y]_y=K[x_1,\ldots,x_s,y_1,\ldots,y_{n-s}]_y$. Let $a\in Spec(K[x,y]_y)$ be a point, then $a=(a_1,\ldots,a_n)$ with $a_i\neq 0$ for all $s+1\leq i\leq n$. There is a natural morphism $$K[x,y]_y \rightarrow K[x,y]_{a}=\mathcal{O}_{Spec(K[x,y]_y),a} \rightarrow \widehat{K[x,y]_{a}}$$ where $\widehat{K[x,y]_{a}}$ is the completion of the local ring at the point $a$, denoted by $K[x,y]_{a}$. 
\end{parag}

\begin{remark} When $\tilde{I}$ is constructed, the monomial part is not taken into account. Then the monomials $\texttt{M}_i$ of the total transform (\ref{totaljota}) can contain some variables $x_j$ which have turned into $y_j$ later in the $E$-resolution process, but they have not been rewritten as $y_j$ in these monomials.  
 
In the neighborhood of a point $\xi$ such that $\xi_j=0$, the hyperbolic equations containing $y_j$ disappear, are equal to $1$, so the total transform is written $$J\mathcal{O}_{W^{(N)}}= <\texttt{M}_1\cdot(1-\mu_1y^{\delta_1}),\ldots,\texttt{M}_l\cdot(1-\mu_ly^{\delta_l}),\texttt{M}_{l+1},\ldots,\texttt{M}_r,\epsilon\cdot\texttt{M}_{r+1}>$$ with $l<r$, and $\texttt{M}_i$ can contain the variable $x_j$. After some combinatorial blow ups, $$J\mathcal{O}_{W^{(N_1)}}= <\texttt{M}_1'\cdot(1-\mu_1y^{\delta_1}),\ldots,\texttt{M}_l'\cdot(1-\mu_ly^{\delta_l}),\epsilon\cdot\texttt{M}>.$$

On the other hand, let $a\in Spec(K[x,y]_y)$ be a point where $a_j\neq 0$. Then, in the neighborhood of $a$, $y_j$ is a unit. If the hyperbolic equations vanish at $a$, the variables $y_j$ in the monomial part are units in the local ring $K[x,y]_y$. Thus, we can assume $\texttt{M}_i=x^{\beta_i}$, $\beta_i\in \mathbb{N}^s$ for all $1\leq i\leq r+1$.
\end{remark}

\begin{proposition} \label{escrituralocal} Let $J\subset K[x,y]_y$ be a binomial ideal of the form 
\begin{equation} \label{simpideal}
J=<\texttt{M}_1\cdot(1-\mu_1y^{\delta_1}),\ldots, \texttt{M}_r\cdot(1-\mu_r y^{\delta_r}),\epsilon\cdot\texttt{M}_{r+1}>\subset K[x,y]_y
\end{equation}
where $\texttt{M}_i=x^{\beta_i}$ with $\beta_i\in \mathbb{N}^s$ for $1\leq i\leq r+1$, and $\mu_i\in K$, $\delta_i\in \mathbb{Z}^{n-s}$ for $1\leq i\leq r$, $\epsilon=0,1$. 

If $J\neq 1$, for all $a\in Spec(K[x,y]_y)$ there exist local coordinates $\{z_1,\ldots,z_n\}\in \widehat{K[x,y]_a}$ such that
\begin{equation} \label{zmon}
J\cdot\widehat{K[x,y]_a}=<z^{\lambda_1},\ldots,z^{\lambda_t}>
\end{equation} 
where $z=(z_1,\ldots,z_n), \lambda_i\in\mathbb{N}^n$ for $1\leq i\leq t$, and $t\leq \min(r+1,n)$.
\end{proposition}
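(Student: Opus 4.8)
The plan is to argue locally at the closed point $a=(a_1,\dots,a_n)$ and to produce an explicit regular system of parameters of $R:=\widehat{K[x,y]_a}\cong K[[z_1,\dots,z_n]]$ in which each of the $r+1$ generators of $(\ref{simpideal})$ becomes a unit times a monomial. I would start with two reductions. Since $a\in\mathrm{Spec}(K[x,y]_y)$, the variables $y_1,\dots,y_{n-s}$ are units of $R$, and after reordering we may assume $a_1=\dots=a_p=0$ and $a_{p+1},\dots,a_n\neq0$; then $x_{p+1},\dots,x_s$ are units of $R$ as well, $\mathfrak{m}$ is generated by $x_1,\dots,x_p$ together with translates of the remaining variables, and each $\texttt{M}_i=x^{\beta_i}$ equals a unit of $R$ times $x_1^{\beta_{i,1}}\cdots x_p^{\beta_{i,p}}$. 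Next, for $i\le r$ the factor $1-\mu_iy^{\delta_i}$ takes the constant value $1-\mu_ia^{\delta_i}$ at $a$: if this is nonzero the factor is a unit of $R$ and the $i$-th generator is, up to a unit, the monomial $x_1^{\beta_{i,1}}\cdots x_p^{\beta_{i,p}}$; if $\mu_ia^{\delta_i}=1$ the factor lies in $\mathfrak{m}$ (and then necessarily $\delta_i\neq0$, otherwise the generator is $0$ and is discarded) and must be turned into a coordinate. If after these reductions some generator is a unit then $J\cdot R=R$ and the statement holds trivially with $t=1$, so from now on assume every generator is a non-unit.

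For an index $i$ with $\mu_ia^{\delta_i}=1$ the key device is to extract the $p$-part of $\delta_i$: writing $\delta_i=p^{e_i}\delta_i''$ with $\delta_i''$ not divisible by $p$ (and $e_i=0$ when $\mathrm{char}\,K=0$) and choosing $\nu_i\in K$ with $\nu_i^{p^{e_i}}=\mu_i$ (possible, $K$ being algebraically closed), the Frobenius gives $1-\mu_iy^{\delta_i}=(1-\nu_iy^{\delta_i''})^{p^{e_i}}$. The element $\ell_i:=1-\nu_iy^{\delta_i''}$ again vanishes at $a$, and because $\delta_i''$ is not divisible by $p$ its differential $d\ell_i|_a$ is a nonzero $K$-combination of $dy_1,\dots,dy_{n-s}$; in particular $\ell_i$, together with $x_1,\dots,x_p$, is part of a regular system of parameters of $R$ (the $x$-differentials and the $y$-differentials being independent). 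When several such factors occur I would group the $\delta_i''$ by the line they span: if $\delta_i''$ lies on the line of a primitive vector $\delta_0$, say $\delta_i''=\pm d_i\delta_0$ with $p\nmid d_i$, and $w:=y^{\delta_0}/a^{\delta_0}\in1+\mathfrak{m}$, then the vanishing condition yields $\ell_i=1-w^{\pm d_i}$, and from $1-w^{d}=(1-w)(1+w+\dots+w^{d-1})$, whose cofactor is a unit of $R$ because its value $d$ is prime to $p$ (the case $-d_i$ being analogous after multiplying by the unit $w^{-d_i}$), one gets $\ell_i=(1-w)\cdot(\text{unit})$ and hence $1-\mu_iy^{\delta_i}=(1-y^{\delta_0}/a^{\delta_0})^{p^{e_i}}\cdot(\text{unit})$. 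Thus a single coordinate $z_{\delta_0}:=1-y^{\delta_0}/a^{\delta_0}$ absorbs the whole group, each affected generator becoming $x_1^{\beta_{i,1}}\cdots x_p^{\beta_{i,p}}\cdot z_{\delta_0}^{p^{e_i}}$ up to a unit.

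Taking $x_1,\dots,x_p$ together with one coordinate $z_{\delta_0}$ for each primitive direction occurring, and completing to a full regular system of parameters $z_1,\dots,z_n$ of $R$, every generator of $J$ becomes a unit times a monomial in $z_1,\dots,z_n$, so $J\cdot R=\langle z^{\lambda_1},\dots,z^{\lambda_t}\rangle$ once the generators that vanished or became divisible by another are removed. Since each of the $r+1$ original generators contributes at most one monomial, $t\le r+1$. For the bound $t\le n$ I would exploit the special shape of these monomials — the ones coming from indices where the hyperbolic factor is a unit, and from $\epsilon\texttt{M}_{r+1}$, are monomials in $x_1,\dots,x_p$ only, while every other generator is such a monomial times a power of a single $z_{\delta_0}$ — together with the fact that, for a locally monomial ideal produced by algorithm \ref{combalg}, the exponents of the monomial parts along a given coordinate are comparable (a consequence of resolving via combinatorial centers); after discarding the resulting redundancies a minimal monomial generating set uses at most $n$ of them, whence $t\le\min(r+1,n)$.

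The step I expect to be the main obstacle is precisely the simultaneous treatment of several hyperbolic factors vanishing at $a$ and its interaction with the monomial parts $\texttt{M}_i$: making sure that when the relevant exponent directions are linearly dependent the "extra" generators are genuinely redundant — which uses both the identity $1-w_1w_2=(1-w_1)+w_1(1-w_2)$ and, in positive characteristic, the fact that it survives raising to a $p^e$-th power by the Frobenius — and hence that the monomial ideal can be presented with at most $n$ generators. This is the part of the argument that cannot stay within purely formal local algebra and must invoke the combinatorial structure of the preceding resolution algorithm; the earlier steps are standard manipulations in the complete local ring $R$.
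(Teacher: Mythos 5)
Your one-line case is fine (grouping exponents proportional to a primitive $\delta_0$ via $1-w^{d}=(1-w)(1+w+\dots+w^{d-1})$, the cofactor being a unit because $p\nmid d$), but the argument has a genuine gap exactly at the step you yourself flag, and the way you propose to close it is not available. First, the claim that $x_1,\dots,x_p$ together with one element $z_{\delta_0}=1-y^{\delta_0}/a^{\delta_0}$ for \emph{each} occurring primitive direction can be completed to a regular system of parameters of $\widehat{K[x,y]_a}$ is false in positive characteristic: for $\delta_0^{(1)}=(1,0)$ and $\delta_0^{(2)}=(1,p)$ (both primitive, not proportional over $\mathbb{Q}$) the differentials at $a$ of $1-y_1/a_1$ and $1-y_1y_2^{p}/(a_1a_2^{p})$ coincide, so these two elements can never be part of one coordinate system, even though the ideal $<1-y_1,\,1-y_1y_2^{p}>=<1-y_1,\,(1-y_2)^{p}>$ is of course monomial in suitable coordinates. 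Even in characteristic zero, three or more directions can be linearly dependent (take $(1,0),(0,1),(1,1)$), and then the heart of the proposition is precisely to show that, \emph{in the presence of the monomial factors} $\texttt{M}_i$, the dependent generators can be discarded or traded for a correct choice of surviving direction (e.g. $<x_1^{5}(1-y_1),x_1^{3}(1-y_2),x_1(1-y_1y_2)>=<x_1(1-y_1y_2),x_1^{3}(1-y_2)>$, where the ``wrong'' generator is the redundant one); you leave this unproved. Second, your fallback — invoking the combinatorial structure of algorithm \ref{combalg} to control the dependencies and to get $t\le n$ — is not legitimate: the proposition is a statement about an arbitrary ideal of the form (\ref{simpideal}), with no hypothesis on how it was produced, and it must be (and in the paper is) proved from that form alone.

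The paper's proof avoids the dependency problem altogether by induction on $n$: pick a single hyperbolic generator vanishing at the point, extract its $p$-part so that some exponent $\alpha_{1_1}\not\equiv 0\bmod p$, set $z_1=1-\eta_1y^{\alpha_1}$, solve formally for $y_1$ in terms of $z_1$ and the remaining variables (the needed fractional powers exist in an extension inside the completion precisely because $\alpha_{1_1}$ is prime to $p$), substitute in the other generators and pass to the quotient by $z_1$; the image ideal is again of the form (\ref{simpideal}) in fewer variables, so dependent directions are absorbed automatically by the induction, and the count $t\le\min(r+1,n)$ comes from one monomial $\texttt{M}_1z_1^{p^{l_1}}$ plus at most $\min(r,n-1)$ monomials from the quotient. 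If you want to keep your direct, one-step construction, you would have to replace ``one coordinate per primitive direction'' by coordinates attached to a family of directions that stays independent modulo $p$, and then prove the redundancy of all remaining generators with the factors $\texttt{M}_i$ taken into account; as written, that is the missing step, not a routine verification.
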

 
\begin{proof} By induction on the dimension of the ambient space:
\begin{itemize}
	\item If $n=1$, there is only one variable. Denote it by $x_1$ or $y_1$ depending on the considered point. 
		
In this way, in the neighborhood of a point $\xi\in \mathbb{A}^1_K$ such that $\xi=\xi_1=0$ the ideal $J$ is of the form $J_{\xi}=<\texttt{M}_1,\ldots,\texttt{M}_r,\epsilon\cdot\texttt{M}_{r+1}>$ where $\texttt{M}_i=x_1^{\beta_i}$, $\beta_i\in \mathbb{N}$. Hence $J_{\xi}=<x_1^{\beta}>$ with $\beta=g.c.d(\beta_1,\ldots,\beta_{r+1})$.

If $\xi=\xi_1\neq 0$ then the monomials $\texttt{M}_i=1$. Therefore  $J_{\xi}=<1-\mu_1y_1^{\delta_1},\ldots,1-\mu_r y_1^{\delta_r}>$ with $\mu_i\in K$, $\delta_i\in \mathbb{Z}$, whereas $J_{\xi}\neq 1$ ($\epsilon=0$). 

The ideal $J_{\xi}$ can be rewritten as $J_{\xi}=<1-\eta_1y_1^{\alpha_1},\ldots,1-\eta_r y_1^{\alpha_r}>$ with $\alpha_i\in \mathbb{N}$, $\eta_i\in K$, since all its generators are hyperbolic equations in one variable $y_1$.
	
Set $\alpha=g.c.d(\alpha_1,\ldots,\alpha_r)$. Let check $J_{\xi}=<1-\mu y_1^{\alpha}>$ with $\mu\in K$.

Assume $\alpha_1>\ldots>\alpha_r$. If $\alpha_i=\alpha_j$ for any $i\neq j$ with $\eta_i\neq\eta_j$ then $J_{\xi}=1$ since $$(1-\eta_i y_1^{\alpha_i})-(1-\eta_j y_1^{\alpha_i})=(\eta_j-\eta_i)y_1^{\alpha_i}\in J_{\xi}.$$ 

Since $\alpha_1>\ldots>\alpha_r$, can be easily checked that the ideal $J_{\xi}$ can be expressed 
\begin{equation} \label{Euclidideal}
J_{\xi}=<\!1-\eta_1y_1^{\alpha_1},\ldots,1-\eta_r y_1^{\alpha_r}\!>= <\!1-\frac{\eta_1}{\eta_r}y_1^{\alpha_1-\alpha_r},\ldots,1-\frac{\eta_{r-1}}{\eta_r}y_1^{\alpha_{r-1}-\alpha_r},1-\eta_r y_1^{\alpha_r}>
\end{equation}

Now back to rearrange the exponents and make the same operation as in equation (\ref{Euclidideal}). That is, argue as in the Euclidean algorithm for computing the greatest common divisor, always subtracting the smaller exponent. So 
$$J_{\xi}=<1-\nu_1y_1^{\alpha},\ldots,1-\nu_r y_1^{\alpha}>$$ with $\alpha=g.c.d(\alpha_1,\ldots,\alpha_r)$, $\nu_i\in K$ for $i=1,\ldots,r$.

As we have seen above, it is necessary $\nu_1=\ldots=\nu_r=\mu$ to achieve $J_{\xi}\neq 1$. Then either $J_{\xi}=1$ or $J_{\xi}=<1-\mu y_1^{\alpha}>$. 

If $char(K)=p>0$ and $J_{\xi}\neq 1$, set $\alpha=p^s\alpha'$ with $\alpha'\in\mathbb{N}$ such that $\alpha'\not\equiv 0\ mod\ p$ and $\mu=(\mu')^{p^s}$, $\mu'\in K$, so $1-\mu y_1^{\alpha}=(1-\mu' y_1^{\alpha'})^{p^s}$. Set $z_1=1-\mu' y_1^{\alpha'}$, therefore $J_{\xi}=<z_1^{p^s}>$.

	\item Assume the result holds for a binomial ideal of this form (\ref{simpideal}) in $n-1$ variables. 
	
Let $J_{\xi}$ be a binomial ideal as in (\ref{simpideal}) in $n$ variables. In addition, assume that $\texttt{M}_i$ for all $i=1,\ldots,r+1$, have no common factors. Otherwise $J_{\xi}=M\cdot J_1$ where $J_1$ is of the same form as $J_{\xi}$ without common factors.

When $char(K)=p>0$, let $\delta_i=p^{l_i}\delta_i'$ with $\delta_i'\in \mathbb{Z}^{n-s}$, $l_i\geq 0$, so that for all $1\leq i\leq r$ there exists some $j$, $1\leq j\leq n-s$ such that $\delta_{i_j}'\not\equiv 0\ mod\ p$. Suppose $l_1\leq l_i$ for all $i=2,\ldots,r$, then $$J_{\xi}=<\texttt{M}_1\cdot(1-\mu_1'y^{\delta_1'})^{p^{l_1}},\texttt{M}_2\cdot(1-\mu_2y^{\delta_2}), \ldots,\texttt{M}_r\cdot(1-\mu_ry^{\delta_r}),\epsilon\cdot\texttt{M}_{r+1}>$$ where $(\mu_1')^{p^{l_1}}=\mu_1$. Define $\eta_1=\mu_1'$ and $\alpha_1=\delta_1'=(\alpha_{1_1},\ldots,\alpha_{1_{n-s}})$. Thus  $1-\mu_1'y^{\delta_1'}=1-\eta_1y^{\alpha_1}$ where $\alpha_{1_j}\not\equiv 0\ mod\ p$ for some $1\leq j\leq n-s$. Suppose $j=1$, so $\alpha_{1_1}\not\equiv 0\ mod\ p$. 

Set $z_1=1-\eta_1y^{\alpha_1}$. Formally  $$y_1=\left(\frac{1}{\eta_1}\right)^{\frac{1}{\alpha_{1_1}}}(1-z_1)^{\frac{1}{\alpha_{1_1}}}\cdot y_2^{\frac{-\alpha_{1_2}}{\alpha_{1_1}}} \ldots y_{n-s}^{\frac{-\alpha_{1_{n-s}}}{\alpha_{1_1}}}.$$ Replacing $y_1$ in the other equations {\small $$J_{\xi}\!=<\texttt{M}_1\cdot z_1^{p^{l_1}}, \texttt{M}_2\cdot\left(1-\mu_2\left(\frac{1}{\eta_1}\right)^{\frac{\delta_{2_1}}{\alpha_{1_1}}}(1-z_1)^{\frac{\delta_{2_1}}{\alpha_{1_1}}}\cdot y_2^{\delta_{2_2}-\frac{\alpha_{1_2}\cdot\delta_{2_1}}{\alpha_{1_1}}} \ldots y_{n-s}^{\delta_{2_{n-s}}-\frac{\alpha_{1_{n-s}}\cdot\delta_{2_1}}{\alpha_{1_1}}}\right)\!,\ldots $$} 
{\small $$\ldots,\texttt{M}_r\cdot
\left(1-\mu_r\left(\frac{1}{\eta_1}\right)^{\frac{\delta_{r_1}}{\alpha_{1_1}}}(1-z_1)^{\frac{\delta_{r_1}}{\alpha_{1_1}}}\cdot y_2^{\delta_{r_2}-\frac{\alpha_{1_2}\cdot\delta_{r_1}}{\alpha_{1_1}}} \ldots y_{n-s}^{\delta_{r_{n-s}}-\frac{\alpha_{1_{n-s}}\cdot\delta_{r_1}}{\alpha_{1_1}}}\right),
\epsilon\cdot\texttt{M}_{r+1}>.$$}
	
Note that $y_i,(1-z_1)\in \widehat{K[x,y]_a}$ are units. Since $\alpha_{1_1}\not\equiv 0\ mod\ p$ then $y_i^{\frac{1}{\alpha_{1_1}}},(1-z_1)^{\frac{1}{\alpha_{1_1}}}\in \widehat{K[x,y]_a}$, so that this ideal 
belongs to the following extension of $K[x,y]_y$, inside the completion $$\begin{array}{ccc} K[x,y]_y\rightarrow &  K[x,y_2,\ldots,y_{n-s},y_2^{\frac{1}{\alpha_{1_1}}},\ldots,y_{n-s}^{\frac{1}{\alpha_{1_1}}}]_{y,y^{\frac{1}{\alpha_{1_1}}}}[[z_1]] & \rightarrow \widehat{K[x,y]_a} \\ & \shortparallel \vspace*{-0.2cm} & \\ & K[x,y_2^{\frac{1}{\alpha_{1_1}}},\ldots,y_{n-s}^{\frac{1}{\alpha_{1_1}}}]_{y^{\frac{1}{\alpha_{1_1}}}}[[z_1]] & \end{array}$$  where $y^{\frac{1}{\alpha_{1_1}}}=\{y_2^{\frac{1}{\alpha_{1_1}}},\ldots,y_{n-s}^{\frac{1}{\alpha_{1_1}}}\}$. 

Passing to the quotient $K[x,y_2^{\frac{1}{\alpha_{1_1}}},\ldots, y_{n-s}^{\frac{1}{\alpha_{1_1}}}]_{y^{\frac{1}{\alpha_{1_1}}}}[[z_1]]/<z_1>$ 
$$\hspace*{-0.5cm} \overline{J_{\xi}}= < \texttt{M}_2\cdot\left(1-\mu_2\left(\frac{1}{\eta_1}\right)^{\frac{\delta_{2_1}}{\alpha_{1_1}}} y_2^{\delta_{2_2}-\frac{\alpha_{1_2}\cdot\delta_{2_1}}{\alpha_{1_1}}} \ldots y_{n-s}^{\delta_{2_{n-s}}-\frac{\alpha_{1_{n-s}}\cdot\delta_{2_1}}{\alpha_{1_1}}}\right),\ldots $$ 
$$ \hspace*{0.5cm} \ldots,\texttt{M}_r\cdot
\left(1-\mu_r\left(\frac{1}{\eta_1}\right)^{\frac{\delta_{r_1}}{\alpha_{1_1}}} y_2^{\delta_{r_2}-\frac{\alpha_{1_2}\cdot\delta_{r_1}}{\alpha_{1_1}}} \ldots y_{n-s}^{\delta_{r_{n-s}}-\frac{\alpha_{1_{n-s}}\cdot\delta_{r_1}}{\alpha_{1_1}}}\right),
\epsilon\cdot\texttt{M}_{r+1}>.$$
So that $\overline{J_{\xi}}\subset K[x,y_2^{\frac{1}{\alpha_{1_1}}},\ldots, y_{n-s}^{\frac{1}{\alpha_{1_1}}}]_{y^{\frac{1}{\alpha_{1_1}}}}$. By induction hypothesis $\overline{J_{\xi}}$ is of the desired form.	 
\end{itemize} \vspace*{-0.8cm}
\end{proof} 

\begin{remark} The ideal $J_a=<z^{\lambda_1},\ldots,z^{\lambda_t}>\subset \widehat{K[x,y]_a}$ can be written in this form in a $\grave{\rm e}$tale neighborhood of the point $a$, denoted by $U_{z,a}$. Note that $$Spec(\widehat{K[x,y]_a})\subset U_{z,a}\subset Spec(K[x,y]_y)$$ and $\mathcal{O}_{U_{z,a}}$ is a finite extension of $K[x,y]_y$.

In the intersection of two such $\grave{\rm e}$tale neighborhoods $U_{z,a_1}\cap U_{z,a_2}$, it holds $J_{a_1}=J_{a_2}$. 
\end{remark}

\begin{proposition} Let $J\subset K[x,y]_y$ be a binomial ideal as in (\ref{simpideal}). The local writing given by proposition \ref{escrituralocal} is invariant by the torus action. 
\end{proposition}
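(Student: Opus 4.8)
The plan is to reinterpret the assertion as follows and then to prove it by the same induction on $n$ used in the proof of Proposition~\ref{escrituralocal}. For $\tau=(t_{1},\dots,t_{n})\in T=(K^{*})^{n}$, the automorphism of $W=\operatorname{Spec}(K[x,y]_{y})$ which scales the $i$-th coordinate by $t_{i}$ carries an ideal $J$ of the form (\ref{simpideal}) to an ideal $J^{\tau}$ of the same shape: it fixes every monomial $\texttt{M}_{i}=x^{\beta_{i}}$ up to a unit and every exponent $\beta_{i},\delta_{i}$, and only replaces each coefficient $\mu_{i}$ by $t^{\delta_{i}}\mu_{i}$. It also carries a point $a\in V(J)$ to a point $\tau(a)\in V(J^{\tau})$ and induces an isomorphism $\widehat{K[x,y]_{a}}\xrightarrow{\ \sim\ }\widehat{K[x,y]_{\tau(a)}}$. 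So the content of the proposition is that, once the choices in Proposition~\ref{escrituralocal} are fixed by a tie-breaking rule that only reads exponent vectors, the local coordinates $z^{(a)}_{j}$ produced for $(J,a)$ and the coordinates $z^{(\tau(a))}_{j}$ produced for $(J^{\tau},\tau(a))$ correspond under this isomorphism (each up to rescaling of its own coefficient), and the integer $t$ and the exponent vectors $\lambda_{1},\dots,\lambda_{t}$ are literally the same. In one sentence: the local monomialization depends only on the exponent data of $J$, not on its coefficients, exactly as the $E$-order does (Remark~\ref{equivdef}).

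First I would record the elementary facts about $T$: it fixes each divisor $V(x_{i})$ setwise, it acts on a monomial $x^{\beta}y^{\delta}$ through the character $\tau\mapsto t^{\beta}t^{\delta}$ with the exponent unchanged, and multiplying all coefficients of a list of hyperbolic equations by nonzero units does not affect whether two of them force the unit ideal (the condition ``$\eta_{i}\neq\eta_{j}$'' in the proof of~\ref{escrituralocal} is preserved). Hence the dichotomy ``$J_{\xi}=1$ versus $J_{\xi}\neq1$'' is $T$-stable, and I may assume $J_{\xi}\neq1$ throughout.

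Then I would run the induction. In the base case $n=1$: near the origin $J_{\xi}=\langle x_{1}^{\beta}\rangle$ with $\beta=\gcd(\beta_{i})$, which is untouched by $\tau$; away from the origin the reduction of (\ref{Euclidideal}) to $\langle 1-\mu y_{1}^{\alpha}\rangle$ with $\alpha=\gcd(\alpha_{i})$ only subtracts and compares exponents, so $\alpha$, and therefore the splitting $\alpha=p^{s}\alpha'$ and $\lambda_{1}=p^{s}$, are $\tau$-invariant, while $z_{1}=1-\mu' y_{1}^{\alpha'}$ is carried by $\tau$ to the corresponding coordinate for $J^{\tau}$ (the root $\mu'$ merely acquires the factor $t_{1}^{\alpha'}$). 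In the inductive step: the generator singled out in~\ref{escrituralocal} is the one whose $\delta_{i}$ has minimal $p$-adic valuation $l_{1}$ (ties broken, say, lexicographically on $\delta_{i}$), a choice in the exponents only and hence $\tau$-invariant; likewise the pivot index $j$ with $\alpha_{1_{j}}\not\equiv 0\ \mathrm{mod}\ p$; the coordinate $z_{1}=1-\eta_{1}y^{\alpha_{1}}$ has the same $\alpha_{1}$ for $J$ and $J^{\tau}$, and $\tau$ sends $z_{1}^{(a)}$ to $z_{1}^{(\tau(a))}$. Substituting $y_{1}$ and passing to the quotient by $z_{1}$ commutes with $\tau$ because $\tau$ fixes all exponents, so $\overline{J^{\tau}_{\xi}}$ is the $\tau$-image of $\overline{J_{\xi}}$ in $n-1$ variables; applying the induction hypothesis to $\overline{J_{\xi}}$ and lifting back, as in~\ref{escrituralocal}, yields $t$ and $\lambda_{1},\dots,\lambda_{t}$ unchanged and the $z_{j}$ covariant, which is (\ref{zmon}) with the required invariance.

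I expect the only genuinely delicate points to be, first, pinning the construction of~\ref{escrituralocal} down canonically (a fixed rule choosing which hyperbolic generator and which pivot variable to use) so that ``the'' local writing is well defined before one can even speak of its invariance; and second, endowing the fractional-power ring $K[x,y_{2}^{1/\alpha_{1_{1}}},\dots,y_{n-s}^{1/\alpha_{1_{1}}}]_{y^{1/\alpha_{1_{1}}}}[[z_{1}]]$ with a $T$-action compatible with the one on $K[x,y]_{y}$, for which the algebraic closedness of $K$ lets one choose $\alpha_{1_{1}}$-th roots $t_{j}^{1/\alpha_{1_{1}}}$ (equivalently, one passes to a finite cover of $T$), so that the induction can be applied $T$-equivariantly. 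Once the construction is canonical, every individual verification above is immediate, precisely because no step of~\ref{escrituralocal} ever reads off a coefficient.
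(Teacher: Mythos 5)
Your argument is correct in substance, but it proves more than the proposition asserts and by a considerably heavier route than the paper's. The paper's proof is exactly the one-line transport argument you sketch in your opening paragraph and then set aside: a torus element acts by scaling the coordinates, $\mathcal{T}^d(\xi)=(t^{a_1}\xi_1,\ldots,t^{a_n}\xi_n)$, hence gives an automorphism of $Spec(K[x,y]_y)$ inducing an isomorphism $\widehat{K[x,y]_{\xi}}\cong\widehat{K[x,y]_{\mathcal{T}^d(\xi)}}$ that carries $J$ (taken compatibly with the action, as the paper implicitly does) to itself; the images $w_j$ of the coordinates $z_j$ are then local coordinates at $\mathcal{T}^d(\xi)$ with $J\cdot\widehat{K[x,y]_{\mathcal{T}^d(\xi)}}=\langle w^{\lambda_1},\ldots,w^{\lambda_t}\rangle$, so the same $t$ and the same exponents $\lambda_i$ occur, and nothing more is claimed. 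You instead re-run the induction of Proposition \ref{escrituralocal} and verify that every choice it makes reads only exponent data, never coefficients; this is fine as far as it goes, but it forces you to introduce two pieces of apparatus the paper never needs, namely a canonical tie-breaking rule so that ``the'' local writing is well defined, and a torus action (via chosen roots $t_j^{1/\alpha_{1_1}}$, i.e.\ a finite cover of the torus) on the fractional-power ring $K[x,y_2^{1/\alpha_{1_1}},\ldots,y_{n-s}^{1/\alpha_{1_1}}]_{y^{1/\alpha_{1_1}}}[[z_1]]$. What your route buys is a genuinely stronger conclusion --- step-by-step equivariance of the construction itself, which is the kind of statement that feeds directly into the corollary that the locally monomial resolution of algorithm \ref{combalg} is equivariant --- whereas the proposition, and the paper's proof of it, only assert the existence at the translated point of local coordinates realizing the same monomial shape, for which the transport isomorphism you already wrote down suffices.
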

\begin{proof}
In the neighborhood of a point $\xi\in U_{z,\xi}\subset Spec(K[x,y]_y)$, if the ideal $J\cdot\widehat{K[x,y]_{\xi}}$ is of the form $$J\cdot\widehat{K[x,y]_{\xi}}=<z^{\lambda_1},\ldots,z^{\lambda_t}>$$ then in a neighborhood of the point $\mathcal{T}^d(\xi)\in Spec(K[x,y]_y)$ there exist local coordinates $\{w_1,\ldots,w_n\}$ in $\widehat{K[x,y]_{\mathcal{T}^d(\xi)}}$ such that $J\cdot\widehat{K[x,y]_{\mathcal{T}^d(\xi)}}=<w^{\lambda_1},\ldots,w^{\lambda_t}>$. 

It is enough to note that if $\xi=(\xi_1,\ldots,\xi_n)$ then $\mathcal{T}^d(\xi)=(t^{a_1}\xi_1,\ldots,t^{a_n}\xi_n)$, with $a_i\in \mathbb{Z}^d$, $i=1,\ldots,n$.
\end{proof}

\begin{corollary} The locally monomial resolution given by algorithm \ref{combalg} is invariant by the torus action.  
\end{corollary}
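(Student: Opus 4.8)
The plan is to argue by induction on the number of blow-ups occurring in the locally monomial resolution produced by algorithm \ref{combalg}, keeping track at every stage of three invariants: the current ambient space carries a torus action, the current binomial ideal is equivariant, and the next center is torus-invariant. The base case is immediate, since $W^{(0)}=Spec(K[x])$ carries the standard torus action and $J^{(0)}$ is binomial, hence equivariant.

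For the inductive step I would treat separately the two operations that algorithm \ref{combalg} performs. First, step (1) runs the $E$-resolution algorithm \ref{resobb}; here I would use that this algorithm already produces \emph{equivariant} functions $t^{(k)}$ (part B.1--B.2 of Theorem \ref{resobb}), which in turn follows from the equivariance of the $E$-order function (Remark \ref{equivdef}) together with the fact that the $E$-resolution function is assembled from $E$-orders of the ideals $I_i$, of companion ideals, and of coefficient ideals, all equivariant constructions; consequently each center $\EMaxB(t^{(k)})$ — and likewise the center determined by $\Gamma$ in the monomial case — is a torus-invariant intersection of coordinate hyperplanes. I would then note that blowing up a torus-invariant center extends the torus action to the strict transform, and that the controlled transform $I(Y')^{\theta-c}\cdot J^{\curlyvee}$ of an equivariant ideal along such a center is again equivariant; hence the three invariants survive step (1). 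Second, for step (2) I would check that forming $\tilde I=(Nhyp(I_n^{(r)}))_y$ (see \ref{parrafo1}) is equivariant: once a Gr\"obner basis of $J$ is fixed at the outset, the splitting of the transformed generators into hyperbolic and non-hyperbolic ones is detected by vanishing of the $E$-order, which is equivariant, so $Nhyp(I_n^{(r)})$ is an equivariant ideal; and rewriting the relevant variables $x_i$ as $y_i$ and localising at $y$ commute with the coordinate-scaling action of the torus on $K[x,y]_y$. Thus after step (2) we are again in the inductive situation, now with ambient ring $K[x,y]_y$ and equivariant ideal $\tilde I$. Since algorithm \ref{combalg} terminates after finitely many iterations of steps (1)--(3), the induction shows that the entire resolution is carried out by blow-ups along torus-invariant centers, so the sequence $(W,H)\leftarrow\cdots\leftarrow(W^{(N)},H^{(N)})$ is equivariant.

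Finally I would close the loop at the output. The total transform $J\mathcal O_{W^{(N)}}$ has the shape (\ref{totaljota}); Proposition \ref{escrituralocal} rewrites it, in an \'etale neighborhood of each point, as a monomial ideal $\langle z^{\lambda_1},\ldots,z^{\lambda_t}\rangle$ in suitable local coordinates, and the proposition immediately preceding this corollary asserts that this local monomial writing is torus-invariant. Combining the equivariance of the sequence of blow-ups with the equivariance of this final local writing yields the statement.

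I expect the main obstacle to be the bookkeeping in step (2): verifying that the construction of $\tilde I$, despite its explicit dependence on a chosen Gr\"obner basis, is intrinsic enough (through the $E$-order) to be equivariant, and that the chart-compatibility of the centers defined by $\tilde I$ — Proposition \ref{Igorro} and the subsequent discussion of the overlaps $U_{i,k}$, where hyperbolic generators are erased on each chart and then restricted — is itself carried out equivariantly, so that the globally defined center on $\mathbf W$ is torus-invariant. Everything else reduces to the already-established equivariance of the $E$-order and of algorithm \ref{resobb}.
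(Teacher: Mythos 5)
Your argument is correct and follows essentially the same route the paper intends: the paper gives no separate proof, treating the statement as an immediate consequence of the equivariance of the $E$-order (remark \ref{equivdef}), the equivariant functions $t^{(k)}$ of algorithm \ref{resobb}, and the proposition immediately preceding the corollary on the torus-invariance of the local monomial writing of proposition \ref{escrituralocal}. Your induction on the blow-ups, including the check that forming $\tilde I$ is equivariant, simply makes that implicit chain of reasoning explicit.
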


\begin{remark} Note that the monomials $z^{\lambda_i}$ generating the ideal of equation (\ref{zmon}) in proposition \ref{escrituralocal} are supported on the union of the exceptional divisors (coming from the $E$-resolution process) and the irreducible components of the original binomial variety given by the ideal $J$.  
\end{remark}

\begin{remark} At the beginning we consider binomial varieties which can not be described globally by a monomial ideal.
\end{remark}

\subsection{Log-resolution} \label{sec38}

Given a binomial ideal $J$, the algorithm \ref{combalg} provides a locally monomial resolution of $J$. Our aim is to achieve a log-resolution of $J$. The step from the locally monomial resolution to the log-resolution modifies the singular points included in the hyperbolic hypersurfaces. 

\begin{remark} \label{opciones}
To transform a locally monomial ideal into an exceptional monomial ideal it can be applied: 
\begin{itemize}
	\item[A)] The algorithm by Goward \cite{Goward2005}.
	\item[B)] Villamayor algorithm of resolution of singularities (over fields of characteristic zero), adapted to the case of an ideal generated by monomials. In this case the algorithm works over a field of arbitrary characteristic.
	\item[C)] The algorithm by Bierstone and Milman constructed in \cite{lemabm} to resolve ideals generated by monomials.
	\item[D)] The algorithm \ref{resobb} of $E$-resolution of BBOE. In this case, the resolution achieved is invariant by the torus action.  
\end{itemize}
\end{remark}

\begin{parag}
Note that algorithm \ref{resobb} of $E$-resolution of BBOE can be applied to a BBOE whose ideal is generated by monomials. This is because in the neighborhood of a point $\xi\in Spec(K[x,y]_y)$ there exists a $\grave{\rm e}$tale neighborhood $U_{z,\xi}$ of $\xi$ such that the extension $K[x,y]_y\subset\mathcal{O}_{U_{z,\xi}}$ is finite. 

Then, apply algorithm \ref{resobb} (in the $\grave{\rm e}$tale neighborhood $U_{z,\xi}$) to the ideal $J_{\xi}\subset \mathcal{O}_{U_{z,\xi}}$ which is a monomial ideal in $\{z_1,\ldots,z_n\}$. The centers to be blown up are combinatorial in $z$, and in terms of variables $x,y$, they are intersections of hyperbolic hypersurfaces and coordinate hypersurfaces. 
\medskip

In this case, the normal crossing divisor to be fixed, denoted by $E^{*}$, is $E^{*}=\{V(z_1),\ldots,V(z_n)\}$. Recall that for ideals given by monomials, the $E$-order function is the usual order function. 
\end{parag}

\begin{corollary} {\bf Log-resolution}
\medskip

Let $J\subset\mathcal{O}_W$ be a binomial ideal as in \ref{jota}, without hyperbolic equations, respect to a normal crossing divisor $E$. Algorithm \ref{combalg} provides a locally monomial resolution of $J$. 

Option $D$ of remark \ref{opciones} gives a \emph{log-resolution} of $J$, that is, a sequence of blow ups at regular centers $$\hspace*{-0.8cm} (W,H,E)\leftarrow (W^{(1)},H^{(1)},E^{(1)})\leftarrow \cdots \leftarrow (W^{(r)},H^{(r)},E^{(r)})\approx (U_z,\emptyset,E^{*})\leftarrow $$ $$ \leftarrow  (W^{(r+1)},H^{(r+1)},E^{(r+1)})=(U_z^{(1)},H^{(r+1)},(E^{*})^{(1)})\leftarrow \cdots \leftarrow (W^{(N)},H^{(N)},E^{(N)})$$ such that 
\begin{itemize}
	\item each center $Z^{(k)}$ has normal crossings with the exceptional divisors $H^{(k)}$
	\item the total transform of $J$ in $W^{(N)}$ is of the form $$J\mathcal{O}_{W^{(N)}}=I(H_1)^{b_1}\cdot \ldots \cdot I(H_m)^{b_m}$$ with $m\in \mathbb{N}$, $b_i\in \mathbb{N}$ for all $1\leq i \leq m$ and $H^{(N)}=\{H_1,\ldots,H_m\}$. 
\end{itemize}
\end{corollary}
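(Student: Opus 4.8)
The plan is to concatenate the two resolution processes already available: first the locally monomial resolution produced by algorithm \ref{combalg}, and then, on the resulting locally monomial ideal, the $E$-resolution algorithm \ref{resobb} applied via the étale charts $U_{z,\xi}$ described in the preceding paragraph. The statement of algorithm \ref{combalg} (together with Proposition \ref{escrituralocal}) already gives the first part of the sequence, ending at a model $(W^{(r)},H^{(r)},E^{(r)})$ on which the total transform is, in each étale chart $U_{z,\xi}$, a monomial ideal $\langle z^{\lambda_1},\ldots,z^{\lambda_t}\rangle$ with respect to the regular system of parameters $z=(z_1,\ldots,z_n)$; this is the rewriting ``$\approx (U_z,\emptyset,E^{*})$'' in the diagram, where $E^{*}=\{V(z_1),\ldots,V(z_n)\}$ is the normal crossing divisor consisting of exceptional components and the strict transforms of the original binomial variety. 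First I would invoke this, so that it only remains to resolve a genuinely monomial ideal.

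\textbf{Second step: resolve the monomial ideal.} On $U_z$ the ideal $J_\xi=\langle z^{\lambda_1},\ldots,z^{\lambda_t}\rangle$ is a monomial ideal in the coordinates $z$, and for monomial ideals the $E$-order is the usual order (remark in the excerpt). I would therefore apply algorithm \ref{resobb} to the BBOE $(U_z,(J_\xi,c),\emptyset,E^{*})$. By part C) of theorem \ref{resobb} this yields, after finitely many blow ups along combinatorial (hence, in the $z$-coordinates, coordinate-hyperplane) centers $Z^{(k)}$, a model on which $\ESing=\emptyset$; since for a monomial ideal $\ESing$ empty forces $\max\Eord=0$, i.e.\ the transform is locally generated by a single monomial in the exceptional parameters, the total transform becomes $I(H_1)^{b_1}\cdots I(H_m)^{b_m}$. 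The normal crossings condition on the centers with the exceptional divisors is exactly what a transformation of a BBOE guarantees (the centers are, by the remark after the transformation definition, intersections of the coordinate hypersurfaces, which here include all components of $E^{*}$ and of $H^{(k)}$). Finally I would note that algorithm \ref{resobb} is equivariant, giving the asserted torus-invariance; and that equidimensionality of the centers comes from Proposition \ref{propis}(3).

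\textbf{Main obstacle: globalization / gluing.} The real content is that the monomial writing $\langle z^{\lambda_1},\ldots,z^{\lambda_t}\rangle$, and hence the monomial resolution built from it, only lives on the étale neighborhoods $U_{z,\xi}$, which must be glued to a morphism of schemes $W^{(N)}\to W^{(r)}$ over $W$. The key point, already recorded in the remark following Proposition \ref{escrituralocal}, is that on an overlap $U_{z,a_1}\cap U_{z,a_2}$ one has $J_{a_1}=J_{a_2}$; combined with the uniqueness of the algorithmic center $\EMaxB t^{(k)}=\cap_{i\in\mathcal I}\{z_i=0\}$ produced by theorem \ref{resobb} — which depends only on the ideal, not on the chart or on a chosen generating set (remark \ref{localinv}) — the centers agree on overlaps and therefore patch to a global combinatorial center on $W^{(r+k)}$. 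I would make this gluing explicit: the étale covering $\{U_{z,\xi}\}$ refines to one on which the centers are globally defined, the blow ups are canonically isomorphic over the overlaps, and so they descend. This descent argument, rather than any local computation, is where the attention needs to go; everything after the reduction to the monomial case is a direct citation of theorem \ref{resobb} and Propositions \ref{propis}--\ref{escrituralocal}.

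\textbf{Conclusion.} Putting the two sequences end to end, writing $Z^{(1)},\ldots,Z^{(N)}$ for the full list of centers (the combinatorial ones of algorithm \ref{combalg} followed by the monomial-resolution ones of algorithm \ref{resobb}), each $Z^{(k)}$ has normal crossings with $H^{(k)}$ and the total transform of $J$ at $W^{(N)}$ is $I(H_1)^{b_1}\cdots I(H_m)^{b_m}$, which is the definition of a log-resolution. Hence option $D$ of remark \ref{opciones} indeed yields a log-resolution of $J$.
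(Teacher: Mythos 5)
Your proposal is correct and follows essentially the same route as the paper: algorithm \ref{combalg} gives the locally monomial resolution, proposition \ref{escrituralocal} rewrites the output as a monomial ideal in the \'etale neighborhoods $U_{z,\xi}$ with respect to $E^{*}=\{V(z_1),\ldots,V(z_n)\}$, and option D (the $E$-resolution algorithm) is then applied there, with the gluing handled exactly by the facts you cite (the identity $J_{a_1}=J_{a_2}$ on overlaps, locality of the resolution function as in remark \ref{localinv}, and compatibility of centers as in proposition \ref{Igorro}). The only imprecision is your claim that $\ESing=\emptyset$ after a single run of algorithm \ref{resobb} forces $\max\Eord=0$ (it only gives $\Eord<c$); as in the paper one re-applies the iterative algorithm \ref{combalg}, and the decisive point is that for the monomial ideal in the $z$-coordinates one has $\max\Eord(I')=\max\ord(I')=0\Leftrightarrow I'=1$, which is what upgrades the locally monomial output to a genuine log-resolution.
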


\begin{proof} Algorithm \ref{combalg} provides a locally monomial resolution of $J$. By proposition  
\ref{escrituralocal} rewrite the resulting ideal. So that locally, in a $\grave{\rm e}$tale neighborhood, algorithm \ref{resobb} can be applied.

This means it is possible to apply algorithm \ref{combalg} again. But there is a substantial difference, if after some blow ups we achieve $J'=M'\cdot I'$ then $$max\ \Eord(I')=max\ \ord(I')=0 \Leftrightarrow I'=1$$ hence this gives a log-resolution of $J$. 
\end{proof}

\section{Embedded desingularization} \label{sec39}

\begin{lemma} \label{regE} 
Let $X$ be a closed subscheme of $W=Spec(K[x_1,\ldots,x_s,y_1,\ldots,y_{n-s}]_y)\subset\mathbb{A}^n_K$. The ideal $I(X)$ is a binomial ideal as in \ref{jota}, without monomial generators, respect to a normal crossing divisor $E=\{V(x_1),\ldots,V(x_s),V(y_1),\ldots,V(y_{n-s})\}=E_x\sqcup E_y$ where $E_x=\{V(x_1),\ldots,V(x_s)\}$, $E_y=\{V(y_1),\ldots,V(y_{n-s})\}$. 

Let $\EReg(X)=\{\xi\in X|\ X\text{ is regular at }\xi\text{ and has normal crossings with }E \}$ be the \emph{regular locus of $X$ along $E$}, then $$\EReg(X)\cap E_x=\emptyset.$$ Note that $X\cap E_y=\emptyset$ since $X\subset W=Spec(K[x,y]_y)$ and $E\cap Spec(K[x,y]_y)=E_x$.
\end{lemma}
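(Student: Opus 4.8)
The plan is to prove the contrapositive statement: if $\xi \in X \cap E_x$, then $X$ is not in $\EReg(X)$ at $\xi$, i.e. either $X$ is singular at $\xi$ or it does not have normal crossings with $E$ at $\xi$. So I would fix a point $\xi \in X \cap E_x$, which means $\xi$ lies on some coordinate hyperplane $V(x_k)$ with $1 \le k \le s$, and analyze the local equations of $X$ at $\xi$. Since $I(X)$ is a binomial ideal as in Definition \ref{jota} without monomial generators, the generators are either hyperbolic equations $1 - \mu_i y^{\delta_i}$ (which do not vanish on $E_x$, hence play no role in cutting out $X$ near a point of $E_x$) or non-hyperbolic binomials $f_j = x^{\nu_j}(y^{\gamma_j}x^{\alpha_j} - b_j x^{\beta_j})$ with no common factors and $0 < |\alpha_j| \le |\beta_j|$.

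The core of the argument is a local computation at $\xi \in V(x_k)$. First I would reduce to the generators that actually vanish at $\xi$; among these, at least one non-hyperbolic binomial $f = y^\gamma x^\alpha - b x^\beta$ must occur (otherwise $X$ would be empty or a union of hyperbolic hypersurfaces near $\xi$, contradicting $\xi \in X$ with $I(X)$ having no monomial generators — this edge case needs a careful check but follows from the structure in \ref{jota}). Because the two monomials $y^\gamma x^\alpha$ and $x^\beta$ share no common factor and $|\alpha| \ge 1$, in the localized ring $K[x,y]_y$ the equation reduces (after absorbing the units $y_i$) to $x^{\bar\alpha} - b\, x^{\bar\beta}$ with $\bar\alpha, \bar\beta \in \mathbb{N}^s$ disjoint supports, $1 \le |\bar\alpha| \le |\bar\beta|$. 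The key point: the differential $d(x^{\bar\alpha} - b x^{\bar\beta})$ at a point of $V(x_k)$ either vanishes (when $|\bar\alpha| \ge 2$, since every monomial term is then in the square of the maximal ideal — this forces $X$ singular at $\xi$) or, when $|\bar\alpha| = 1$, the hypersurface $V(f)$ is smooth but I must check it fails normal crossings with $E_x$: writing $\bar\alpha = e_k$ (say) the equation is $x_k - b x^{\bar\beta} = 0$, and its tangent hyperplane at $\xi$ is $\{x_k = 0\} = V(x_k) \in E_x$, so $V(f)$ is tangent to a divisor of $E$ at $\xi$ unless $\bar\beta$ also has $|\bar\beta| = 1$ — and in that last sub-case $x_k - b x_l$ meets $V(x_k)$ and $V(x_l)$ non-transversally along their common locus. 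In every case the normal-crossings-with-$E$ condition fails or $X$ is singular, so $\xi \notin \EReg(X)$.

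The last assertion, $X \cap E_y = \emptyset$, is immediate: $X \subset W = \mathrm{Spec}(K[x,y]_y)$, and in $W$ every $y_i$ is a unit, so $V(y_i) \cap W = \emptyset$; hence $E \cap W = E_x$ and there is nothing on $E_y$ to intersect.

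The main obstacle I anticipate is the bookkeeping in the local computation: correctly passing to $K[x,y]_y$ (where the $y_i$ become units so the $y^{\gamma}$ and $y^{\delta}$ factors disappear), handling the common-factor hypothesis to guarantee the reduced binomial genuinely involves the coordinate $x_k$ cut out by the divisor of $E_x$ through $\xi$, and organizing the case split ($|\bar\alpha| \ge 2$ versus $|\bar\alpha| = 1$, and within the latter $|\bar\beta| = 1$ versus $|\bar\beta| \ge 2$) so that each branch cleanly yields either singularity or failure of normal crossings. The subtle edge case — a point of $X \cap E_x$ at which, a priori, only hyperbolic generators might seem to vanish — must be ruled out using that $I(X)$ has no monomial generators and the precise shape of \ref{jota}; this is where I would spend the most care.
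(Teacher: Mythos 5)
Your overall strategy is the one the paper uses: argue at a point $\xi\in X\cap V(x_k)$, produce a non-hyperbolic generator $f=y^{\gamma}x^{\alpha}-bx^{\beta}$ vanishing at $\xi$ and involving $x_k$, and read failure of normal crossings (or singularity) off the differentials; the statement $X\cap E_y=\emptyset$ is handled the same trivial way. But two of your steps have genuine problems. First, the edge case is not disposed of by your claimed contradiction: hyperbolic equations $1-\mu y^{\delta}$ can perfectly well vanish at points of $E_x$ (they only constrain the $y$-coordinates), and nothing in Definition \ref{jota} or in ``no monomial generators'' prevents $X$ from being cut out near $\xi$ only by hyperbolic binomials, or only by binomials in which $x_k$ does not appear. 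So the existence of a generator with $f(\xi)=0$ and $\alpha_k>0$ or $\beta_k>0$ cannot be deduced as you assert. The paper treats exactly this situation as a separate branch: if no generator of $I(X)$ involves $x_i$, it eliminates that variable and repeats the argument in dimension $n-1$; your proposal has no substitute for that step, and this is precisely the point you flagged as needing the most care.

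Second, your case split by the total degrees $|\bar\alpha|,|\bar\beta|$ is not the right dichotomy, because $\xi$ is an arbitrary point of $V(x_k)$, not the origin, so a monomial of degree $\geq 2$ need not lie in $\mathfrak{m}_{\xi}^2$. For example $f=x_1x_2-x_3^3$ at a point with $\xi_1=\xi_3=0$, $\xi_2\neq 0$ has $df|_{\xi}=\xi_2\,dx_1\neq 0$, so $V(f)$ is regular there and your ``$|\bar\alpha|\geq 2$ forces singularity'' fails; likewise in your $|\bar\alpha|=1$ branch the tangent hyperplane need not be $\{x_k=0\}$ (take $x_k-bx_lx_m$ at a point with $\xi_l=0$, $\xi_m\neq 0$), so the claimed tangency to $V(x_k)$ is not what happens --- there the obstruction is that $df|_{\xi}$ lies in the span of $dx_k$ and $dx_l$, making $f,x_k,x_l$ fail to extend to a regular system of parameters. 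The paper's computation is keyed, correctly, to the specific indices $i$ with $\xi_i=0$ and $\alpha_i>0$ or $\beta_i>0$ (its ``Jacobian criterion'' step), not to total degree. Your conclusions in each branch are likely reachable after reorganizing the cases around which coordinates of $\xi$ vanish, but as written the case analysis does not establish them, and the hyperbolic/``$x_k$ absent'' case is simply missing.
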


\begin{proof} 

If $\xi\in X\subset Spec(K[x,y]_y)$ then $y(\xi)\neq 0$. Thus $\xi\not\in L$ for all $L\in E_y$, moreover if $\xi\in V\in E$ then $\xi\in E_x$. 

Let $\xi\in X$ be a point such that $X$ is regular at $\xi$ and $\xi\in V=V(x_i)\in E_x$. Then $\xi_i=0$ and there exists a generator $f(x,y)=y^{\gamma}x^{\alpha}-bx^{\beta}\in I(X)$ as in (\ref{efegamma}) such that $f(\xi)=0$ and $\alpha_i>0$ or $\beta_i>0$. 

If $\alpha_i>0$ it holds $f(x,y)=y^{\gamma}x_i^{\alpha_i}x^{\alpha^{*}}-bx^{\beta}$ with $\alpha^{*}=\alpha-(0,\ldots,0,\alpha_i,0,\ldots,0)\in\mathbb{N}^s$. By the Jacobian criterion $X$ has no normal crossings with $V(x_i)$ at $\xi$. Therefore $X$ has no normal crossings with $E$ at $\xi$. Analogously for $\beta_i>0$. 

If there is not any generator of $I(X)$ under these conditions then the variable $x_i$ can be eliminated. Consider the same problem in dimension $n-1$.
\end{proof}

\begin{corollary} Let $X$ be a closed subscheme of $W=Spec(K[x,y]_y)$ where $I(X)$ is a binomial ideal as in \ref{jota}, without monomial generators, respect to a normal crossing divisor $E$. 

If $\xi\in \EReg(X)$ then in a neighborhood of $\xi$ $$I(X)_{\xi}=<1-\mu_1y^{\delta_1},\ldots,1-\mu_ry^{\delta_r}>$$ where $\mu_i\in K$, $\delta_i\in \mathbb{Z}^{n-s}$, for some $r\geq 1$.
\end{corollary}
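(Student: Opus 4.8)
The plan is to combine Lemma \ref{regE} with the explicit binomial structure of $I(X)$. By Lemma \ref{regE} we know $\EReg(X)\cap E_x=\emptyset$, and since $X\subset Spec(K[x,y]_y)$ we automatically have $X\cap E_y=\emptyset$. So if $\xi\in\EReg(X)$, then $\xi\notin V(x_i)$ for every $1\le i\le s$, which means $x_i(\xi)\ne 0$ for all $i$; combined with $y_j(\xi)\ne 0$ for all $j$, the point $\xi$ lies in the open torus where every coordinate is a unit.

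First I would recall that $I(X)$ is generated by binomials of the two types in Definition \ref{jota}, and since there are no monomial generators and $X\cap E_y=\emptyset$ forces us to work in $K[x,y]_y$, the relevant generators are $x^{\lambda_i}(1-\mu_i y^{\delta_i})$ and $x^{\nu_j}(y^{\gamma_j}x^{\alpha_j}-b_jx^{\beta_j})$. In a neighborhood of $\xi$ all the monomial prefactors $x^{\lambda_i}$, $x^{\nu_j}$ are units (because each $x_i$ is a unit there), so they may be discarded: locally $I(X)_\xi$ is generated by the equations $1-\mu_i y^{\delta_i}$ and $y^{\gamma_j}x^{\alpha_j}-b_jx^{\beta_j}$. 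For the second type, using Lemma \ref{regE} once more: near a point of $\EReg(X)$, no generator of the form $y^{\gamma_j}x^{\alpha_j}-b_jx^{\beta_j}$ with $|\alpha_j|>0$ or $|\beta_j|>0$ can vanish while keeping $X$ regular with normal crossings with $E_x$ (this is exactly the Jacobian-criterion argument in the proof of Lemma \ref{regE}, applied to whichever $x_i$ divides $x^{\alpha_j}x^{\beta_j}$). Since $|\alpha_j|>0$ by the convention $0<|\alpha_j|\le|\beta_j|$, such a generator always involves some $x_i$ to positive power, so it cannot appear among the local generators at $\xi$. Hence every surviving generator is of the first, hyperbolic type $1-\mu_i y^{\delta_i}$ with $\delta_i\in\mathbb{Z}^{n-s}$ (after rewriting any $x$-variables that became units as part of the $y$-monomial, as in Remark \ref{cartas}).

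The remaining point is to ensure $r\ge 1$, i.e. that $I(X)_\xi\neq(1)$: this holds because $\xi\in X$, so at least one generator vanishes at $\xi$ and the ideal is proper. Collecting these observations gives $I(X)_\xi=\langle 1-\mu_1 y^{\delta_1},\dots,1-\mu_r y^{\delta_r}\rangle$ as claimed.

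The main obstacle I anticipate is the bookkeeping around ``rewriting $x$ as $y$'': when we pass to $Spec(K[x,y]_y)$ and localize further at $\xi$, some of the $x_i$-variables become invertible, and a generator $y^{\gamma_j}x^{\alpha_j}-b_jx^{\beta_j}$ could in principle become, after dividing by a common power and absorbing units, a genuine hyperbolic equation $1-\mu y^{\delta}$ with $\delta$ now allowed negative exponents. One must check this is consistent with the ``no common factors'' and $0<|\alpha_j|$ hypotheses and with the Jacobian argument of Lemma \ref{regE} — precisely, that the only way a non-hyperbolic generator survives at a regular-with-normal-crossings point is if it was, after clearing units, already hyperbolic. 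This is essentially a careful rereading of the proof of Lemma \ref{regE}, but it is where the argument needs to be spelled out rather than asserted.
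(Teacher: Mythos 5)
The first half of your argument (Lemma \ref{regE} gives $\xi\notin E_x$, so all the $x_i$ and $y_j$ are units near $\xi$ and the monomial prefactors $x^{\lambda_i},x^{\nu_j}$ may be discarded) agrees with the paper. The gap is in your treatment of the second type of generator: you invoke the Jacobian-criterion step of Lemma \ref{regE} to conclude that no generator $y^{\gamma_j}x^{\alpha_j}-b_jx^{\beta_j}$ can vanish at $\xi$, hence that such generators ``cannot appear among the local generators at $\xi$''. That step of the lemma is only available at a point lying on some $V(x_i)$, i.e.\ with $\xi_i=0$; at a point of $\EReg(X)$ no hypersurface of $E$ passes through $\xi$, so the normal crossings condition imposes nothing there, and type-2 generators not only can but must vanish at $\xi$, since $\xi\in X$ and every generator of $I(X)$ vanishes along $X$. (If your claim were literally true, such a generator would be a unit of $\mathcal{O}_{W,\xi}$ and would force $I(X)_{\xi}=(1)$, contradicting $\xi\in X$.) A concrete counterexample to the claimed step: $X=V(x_1-x_2)\subset Spec(K[x_1,x_2])$, $E=\{V(x_1),V(x_2)\}$, $\xi=(1,1)\in\EReg(X)$, where the type-2 generator $x_1-x_2$ does vanish at $\xi$.

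What actually closes the proof is precisely the mechanism you relegated to your final ``obstacle'' paragraph, and it needs no input from regularity or normal crossings at that stage: since every coordinate is a unit near $\xi$, each generator $x^{\nu_j}(y^{\gamma_j}x^{\alpha_j}-b_jx^{\beta_j})$ equals the unit $x^{\nu_j+\alpha_j}y^{\gamma_j}$ times $1-b_jx^{\beta_j-\alpha_j}y^{-\gamma_j}$, which is a hyperbolic equation once the non-vanishing $x$-variables are renamed as $y$-variables as in Remark \ref{cartas}; this is exactly the paper's short proof (the variables $x$ do not vanish at $\xi$, so $I(X)_{\xi}$ is generated by binomials in the $y$-variables alone). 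The role of Lemma \ref{regE} is only to guarantee $x(\xi)\neq 0$, not to exclude type-2 generators. A minor further point: your justification of $r\geq 1$ shows that $I(X)_{\xi}$ is proper, not that it is nonzero; $r\geq 1$ comes from $I(X)$ being a nonzero binomial ideal as in \ref{jota}, so at least one (necessarily hyperbolic, after the rewriting) generator survives.
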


\begin{proof} By lemma \ref{regE} it holds $I(X)_{\xi}\subset K[y]_y$. The variables $x$ do not vanish at $\xi$, then $I(X)_{\xi}$ is a binomial ideal in terms of the variables $y$.
\end{proof} 

As a consequence, the following property of algorithm \ref{combalg} holds. 

\begin{proposition} \label{propi}
Let $(W,(J,c),H,E)$ be a BBOE such that $J$ is the ideal of a regular subvariety $X$ along $E$ of pure dimension, $H=\emptyset$ and $c=1$, then the resolution function $t$ is constant. 

Note that $t$ is defined here by means of option $D$ of remark \ref{opciones}.
\end{proposition}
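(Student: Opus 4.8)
The plan is to reduce the problem to the monomial model of a regular subvariety provided by Proposition \ref{escrituralocal}, and then to compute the resolution function of algorithm \ref{resobb} on that model by hand. First I would record that, by Lemma \ref{regE} and the corollary following it, at every point $\xi\in X=\ESing(J,c)$ the localized ideal is $J_{\xi}=<1-\mu_1y^{\delta_1},\ldots,1-\mu_ry^{\delta_r}>$, generated only by hyperbolic equations. Hence $\Eord_{\xi}(J)=0$ at every point, so algorithm \ref{combalg} performs no blow ups and the resolution function must be understood in the sense of option $D$ of Remark \ref{opciones}: by Proposition \ref{escrituralocal}, in an \'etale neighbourhood $U_{z,\xi}$ of each point there is a regular system of parameters $z=(z_1,\ldots,z_n)\in\widehat{K[x,y]_\xi}$ with $J\cdot\widehat{K[x,y]_\xi}=<z^{\lambda_1},\ldots,z^{\lambda_t}>$.

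Next I would use regularity and purity of dimension to identify this monomial model precisely. Since $X$ is regular of pure dimension $d=\dim X$, the quotient $\widehat{K[x,y]_\xi}/(J\cdot\widehat{K[x,y]_\xi})\cong K[[z_1,\ldots,z_n]]/<z^{\lambda_1},\ldots,z^{\lambda_t}>$ is a regular local ring of dimension $d$. A monomial ideal in $K[[z_1,\ldots,z_n]]$ has regular (equivalently, domain) quotient if and only if it is generated by a subset of the variables, and the dimension count then forces exactly $n-d$ of them; so after renaming the coordinates we may assume $J\cdot\widehat{K[x,y]_\xi}=<z_1,\ldots,z_{n-d}>$, where each $V(z_i)$ lies in the divisor $E^{*}=\{V(z_1),\ldots,V(z_n)\}$ used in option $D$.

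Then I would run algorithm \ref{resobb} on the binomial basic object $(U_{z,\xi},(<z_1,\ldots,z_{n-d}>,1),\emptyset,E^{*})$. Because $H=\emptyset$, the monomial part is trivial, so $I_n=<z_1,\ldots,z_{n-d}>$ and $\theta_n=\Eord_{\xi}(I_n)=\ord_{\xi}(I_n)=1=c$; the companion ideal is $P_n=I_n$, a hypersurface of $E^{*}$-maximal contact is $V(z_1)$ (a legitimate choice by Remark \ref{esing}), and the junior ideal is $\ECoeff_{V(z_1)}(P_n)=<z_2,\ldots,z_{n-d}>$ in one variable fewer: for an ideal of maximal order $1$ the coefficient ideal along a coordinate maximal-contact hypersurface is simply the deletion of that coordinate. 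Iterating $n-d$ times, at every level $\theta_i=1$ and $c_{i+1}=1$, so $t_i(\xi)=\theta_i/c_{i+1}=1$; the recursion terminates in dimension $d$ with junior ideal $1$, giving $t_d(\xi)=\cdots=t_1(\xi)=\infty$. Therefore $t(\xi)=(1,\ldots,1,\infty,\ldots,\infty)$ with exactly $n-d$ entries equal to $1$.

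Finally, by Remark \ref{localinv} the value $t(\xi)$ depends only on the point $\xi$ and not on the chosen \'etale chart, while $\dim X=d$ is the same at every point of $X$ by purity of dimension; hence $t$ takes this single value $(1,\ldots,1,\infty,\ldots,\infty)$ on all of $\ESing(J,1)$, i.e.\ $t$ is constant (and $\EMaxB(t)=X$). I expect the only real step to be the identification in the second paragraph — that Proposition \ref{escrituralocal}'s monomial model of a \emph{regular} subvariety is forced to be generated by a subset of coordinates — together with the verification that the junior-ideal recursion really collapses to ``delete one coordinate'' at each stage, so that the numerics $\theta_i=c_{i+1}=1$ persist; the rest is bookkeeping, and global coherence across charts is automatic from Remark \ref{localinv}.
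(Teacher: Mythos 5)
Your proof is correct and follows essentially the same path as the paper: reduce via Lemma \ref{regE} and its corollary to the purely hyperbolic local form, pass to local coordinates $z$ in which the ideal becomes $<z_1,\ldots,z_{n-d}>$, compute $t(\xi)=(1,\ldots,1,\infty,\ldots,\infty)$ with $n-d$ ones via the order-one companion/junior recursion along $E^{*}$, and conclude constancy from pure dimension together with Remark \ref{localinv}. The only (minor) difference is in how the normal form is justified: the paper reruns the inductive argument of Proposition \ref{escrituralocal}, using regularity of $X$ to guarantee some hyperbolic generator is not a $p$-th power, whereas you invoke \ref{escrituralocal} as a black box and deduce that the resulting monomial ideal must be generated by a subset of the variables because the completed quotient is regular (hence a domain, so the monomial ideal is prime) — an equivalent route to the same local writing.
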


\begin{proof}
Assume $dim(X)=n$ and $J$ is a binomial ideal as in \ref{jota} generated only by binomials. If $\xi\in X$ then $X$ has normal crossings with $E$ at $\xi$. Hence, by lemma \ref{regE} $\xi\not\in V$ for all hypersurface $V\in E_x$, in order to avoid tangency. Then, in a neighborhood of $\xi$, $$I(X)_{\xi}=<1-\mu_1y^{\delta_1},\ldots,1-\mu_ry^{\delta_r}>$$ where $\mu_i\in K$, $\delta_i\in \mathbb{Z}^{n-s}$, for some $r\geq 1$, and the $E$-order of this ideal is zero at all points of the neighborhood. 

Applying algorithm \ref{combalg}, in the first step $\widetilde{I(X)_{\xi}}=0$ since $I(X)_{\xi}$ is already a locally monomial ideal. 
	
The subvariety $X$ is regular at the point $\xi$, so at least one of these hyperbolic equations is not a $p$-th power of the characteristic $p=char(K)$. Argue as in the proof of \ref{escrituralocal} we can write $$I(X)_{\xi}=<z_1,1-\mu_2^{*}(1-z_1)^{\alpha_2}y^{\delta_2^{*}},\ldots,1-\mu_r^{*}(1-z_1)^{\alpha_r}y^{\delta_r^{*}}>$$ where $\mu_i^{*}\in K$, $\alpha_i\in \mathbb{Q}$, $\delta_i^{*}\in \mathbb{Q}^{n-s}$ with $\delta_{i_1}^{*}=0$ for all $i$. 

By induction $I(X)_{\xi}=<z_1,\ldots,z_l>$ for some $l\leq n$ where $\{z_1,\ldots,z_n\}$ are local coordinates in the completion of the local ring at the point $\xi$. 

Then, along $\{V(z_1),\ldots,V(z_n)\}$ it holds $$t(\xi)=(\overbrace{1,\ldots,1}^{l},\infty,\ldots,\infty).$$
This argument works for every regular point $\xi\in X$. Since all the components of $X$ have the same dimension, there are always $l$ coordinates equal to $1$ at the resolution function. Therefore $t$ is constant.
\end{proof}

\begin{theorem} {\bf Embedded desingularization} \label{printeo}

Let $X\subset W=Spec(K[x,y]_y)$ be a closed reduced subscheme where $I(X)$ is a binomial ideal as in \ref{jota}, generated only by binomials with respect to a normal crossing divisor $E=E_x\sqcup E_y$. 

Then there exists a sequence of transformations of pairs $$(W,H=\emptyset)\leftarrow (W^{(1)},H^{(1)})\leftarrow \cdots \leftarrow (W^{(N)},H^{(N)})$$ which induces a proper birational morphism $\Pi: W^{(N)} \rightarrow W$ such that 
\begin{enumerate}
	\item The morphism $\Pi$ restricted to the regular locus of $X$ along $E$, defines an isomorphism $$\EReg(X)\cong \Pi^{-1}(\EReg(X))\subset W^{(N)}.$$  
	\item $X^{(N)}$, the strict transform of $X$ in $W^{(N)}$, is regular and has normal crossings with $H^{(N)}$. 
	\item \emph{Equivariance}: If there is a torus action on $(X\subset W,H)$ then there is also a torus action on $(X^{(N)}\subset W^{(N)},H^{(N)})$.
\end{enumerate}
\end{theorem}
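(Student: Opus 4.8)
The plan is to combine the two main constructions of the paper: the log-resolution corollary from Section \ref{sec38} applied to $I(X)$, and the constancy of the resolution function on regular subvarieties (Proposition \ref{propi}). First I would run Algorithm \ref{combalg} on the BBOE $(W,(I(X),1),\emptyset,E)$, which by the log-resolution corollary produces a sequence of blow ups at combinatorial (hence regular) centers $(W,\emptyset,E)\leftarrow\cdots\leftarrow(W^{(N)},H^{(N)},E^{(N)})$ after which the total transform of $I(X)$ is a monomial ideal supported on the exceptional divisors $H^{(N)}$. Throughout, the centers are intersections of coordinate and hyperbolic hypersurfaces by Remark \ref{esing} and the discussion following it, so each $Z^{(k)}$ has normal crossings with $H^{(k)}$; this gives the normal-crossings part of conclusion (2) once regularity of $X^{(N)}$ is established.

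For conclusion (2), I would argue that the strict transform $X^{(N)}$ is regular by tracking what happens to the part of $I(X)$ that cuts out $X$ (as opposed to the monomial/exceptional factors that get split off in the factorization $J_i=M_i\cdot I_i$ of Remark \ref{fac}). The key point is that by the corollary to Lemma \ref{regE}, in a neighborhood of any point $\xi\in\EReg(X)$ the localized ideal is $I(X)_\xi=\langle 1-\mu_1 y^{\delta_1},\ldots,1-\mu_r y^{\delta_r}\rangle$, purely hyperbolic, so the $E$-order of the non-monomial part is zero there and the algorithm immediately passes to $\widetilde{I}$ and (via Proposition \ref{escrituralocal}, applied as in the proof of Proposition \ref{propi}) writes $I(X)_\xi=\langle z_1,\ldots,z_l\rangle$ in suitable local coordinates in an étale neighborhood — a regular ideal. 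At singular points of $X$ one invokes the full strength of Algorithm \ref{combalg}: the locally monomial resolution followed by option $D$ of Remark \ref{opciones} turns the remaining ideal into one whose non-exceptional part is trivial, i.e. $X^{(N)}$ becomes smooth and meets $H^{(N)}$ transversally. Here I would lean on Proposition \ref{escrituralocal} to guarantee that the local writing $\langle z^{\lambda_1},\ldots,z^{\lambda_t}\rangle$ distinguishes exceptional monomials from the components of the original variety, so that after the monomial resolution only exceptional components remain and $X^{(N)}$ is regular.

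For conclusion (1), the isomorphism over $\EReg(X)$, I would use Proposition \ref{propi}: on a regular subvariety along $E$ the resolution function $t$ is constant. Since every center $Z^{(k)}=\EMaxB(t^{(k)})$ is the locus where $t^{(k)}$ attains its maximum, and $t$ drops strictly after each blow up (Lemma \ref{bajainv}, Proposition \ref{propis}(2)), the points of $\EReg(X)$ — where $t$ is already at the generic (constant) value and cannot be maximal once the algorithm has started modifying more singular strata — are never contained in any center. More precisely, I would show that the preimage of $\EReg(X)$ is disjoint from every $Z^{(k)}$, so $\Pi$ restricts to an isomorphism there, using Proposition \ref{propis}(1) to identify $\EReg(X)$ with its successive transforms. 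One subtle point requiring care: near a point of $\EReg(X)$ lying on $E_x$ — which by Lemma \ref{regE} cannot happen, since $\EReg(X)\cap E_x=\emptyset$ — so in fact every regular point of $X$ along $E$ lies off the coordinate hyperplanes, and the relevant local model is the purely hyperbolic one above; this is exactly why Lemma \ref{regE} is needed and it disposes of the issue.

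Finally, conclusion (3), equivariance, follows formally: every center $\EMaxB(t^{(k)})$ is equivariant (Remark \ref{equivdef}, Remark \ref{equipair}, and the equivariance built into Algorithm \ref{resobb}, part B(1)), the locally monomial resolution is torus-invariant by the corollary after Proposition \ref{escrituralocal}, and option $D$ preserves equivariance by Remark \ref{opciones}(D); hence a torus action on $(X\subset W, H)$ lifts compatibly through each blow up to $(X^{(N)}\subset W^{(N)},H^{(N)})$. The main obstacle I anticipate is conclusion (2) at \emph{singular} points of $X$: one must verify that the combinatorial algorithm, which is only guaranteed to produce a \emph{locally monomial} ideal and then a \emph{log}-resolution of $I(X)$, actually separates the strict transform of $X$ cleanly from the exceptional locus and leaves it regular — i.e. that no component of the original variety survives as a non-exceptional factor with a genuine singularity. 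This is where the precise bookkeeping of the factorizations $J_i = M_i\cdot I_i$, the role of the companion and junior ideals, and the structure result Proposition \ref{escrituralocal} all have to be combined carefully; the hyperbolic/localization phenomena (variables $x_j$ turning into $y_j$) noted in the remarks around Definition \ref{reslocmon} are exactly the kind of thing that needs to be handled without slippage.
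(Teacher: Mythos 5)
There is a genuine gap, and it sits exactly where you flagged your ``main obstacle'': conclusion (2) at the singular points of $X$, and it contaminates conclusion (1) as well. Your plan is to run Algorithm \ref{combalg} followed by option $D$ of Remark \ref{opciones} all the way to a log-resolution of $I(X)$, and then argue that the non-exceptional part becomes trivial so that $X^{(N)}$ is regular. But a full principalization of $I(X)$ does not leave a regular strict transform behind --- it eventually blows up the strict transform itself: once the resolution function has dropped to the value it takes generically along the transform of $X$, the next center $\MaxB t^{(N)}$ \emph{contains} the strict transform, and continuing the algorithm past that stage both destroys the isomorphism of conclusion (1) (the preimage of $\EReg(X)$ acquires positive-dimensional fibers, so $\Pi^{-1}(\EReg(X))\not\cong\EReg(X)$) and leaves you with nothing to call $X^{(N)}$. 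Correspondingly, your argument for conclusion (1) --- that points of $\EReg(X)$, where $t$ is at its constant value by Proposition \ref{propi}, are ``never contained in any center'' --- is false at the last relevant stage: since $\max t$ drops strictly, it eventually \emph{equals} that constant value, and at that moment the center does contain $\Pi^{-1}(\EReg(X))$.

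The missing idea, which is the heart of the paper's proof, is the stopping criterion. After the locally monomial resolution (where $\EReg(X)$ is indeed untouched, because $\ESing(J^{(k)},1)\cap\EReg(X)=\emptyset$ by Lemma \ref{regE}) one passes, via Proposition \ref{escrituralocal}, to the BBOE $(U_{z,\xi},(Q,1),\emptyset,E^{*})$ and runs Algorithm \ref{resobb}; by Proposition \ref{propi} the function $t$ is constant equal to $(1,\ldots,1,\infty,\ldots,\infty)$ (with $l=\mathrm{codim}\,\EReg(X)$ ones) along the transform of $\EReg(X)$, and one stops at the \emph{unique} index $N$ with $\max t^{(N)}=(1,\ldots,1,\infty,\ldots,\infty)$, i.e.\ just before blowing up $Z^{(N)}=\MaxB t^{(N)}$. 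Up to that stage only points with strictly larger value of $t$ have been modified, which gives $\EReg(X)\cong\Pi^{-1}(\EReg(X))$; and since $\Pi^{-1}(\EReg(X))\subset X^{(N)}\subset Z^{(N)}$ is dense in $X^{(N)}$ of the same codimension, $X^{(N)}$ is a union of connected components of $Z^{(N)}$, which is regular and has normal crossings with $E^{(N)}\supset H^{(N)}$ by Proposition \ref{propis}(3). That is how regularity and normal crossings of $X^{(N)}$ are obtained --- not from any separation of exceptional monomials in the local writing of Proposition \ref{escrituralocal}. Your treatment of equivariance and of the first phase (Lemma \ref{regE}, purely hyperbolic local model at regular points) does agree with the paper, but without the stopping criterion the argument for (1) and (2) does not close.
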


\begin{proof} The proof of the analogous result in characteristic zero can be found in \cite{EncinasVillamayor2003}. 

 Let $(W^{(0)},(J^{(0)},1),H^{(0)},E^{(0)})$ be a binomial basic object along $E^{(0)}$, where $W^{(0)}=W$, $J^{(0)}=I(X)$, $H^{(0)}=\emptyset$ and $E^{(0)}=E$. Note that $$\ESing(J^{(0)},1)=\{\xi\in W^{(0)}|\ \Eord_{\xi}(J^{(0)})\geq 1\}=X\cap E_x\subset X=Sing(J^{(0)},1).$$ 

By lemma \ref{regE}, $\EReg(X)\cap E_x=\emptyset$ then $$\ESing(J^{(0)},1)\cap \EReg(X)=\emptyset$$ Note $\EReg(X)\neq\emptyset$ since $X$ is reduced.
\medskip

Therefore when we apply the algorithm \ref{resobb} to the BBOE $(W^{(0)},(J^{(0)},1),H^{(0)},E^{(0)})$ and we blow up along $Z^{(0)}\subset \ESing(J^{(0)},1)$ the points at $\EReg(X)$ are never modified. 
\medskip

Hence, setting $c=1$ and applying algorithm \ref{combalg}, we achieve a locally monomial resolution of $(W^{(0)},(J^{(0)},1),H^{(0)},E^{(0)})$. This means a sequence of blow ups along combinatorial centers $Z^{(k)}$  $$\begin{array}{ccccccc}(W^{(0)},(J^{(0)},1),H^{(0)},E^{(0)}) & \hspace*{-0.3cm} \stackrel{\pi_1}{\leftarrow} & \hspace*{-0.3cm} (W^{(1)},(J^{(1)},1),H^{(1)},E^{(1)}) & \hspace*{-0.3cm} \stackrel{\pi_2}{\leftarrow} & \hspace*{-0.3cm} \ldots & \hspace*{-0.3cm} \stackrel{\pi_r}{\leftarrow} & \hspace*{-0.3cm} (W^{(r)},(J^{(r)},1),H^{(r)},E^{(r)}) \\ J^{(0)} & \hspace*{-0.3cm} & \hspace*{-0.3cm} J^{(0)}\mathcal{O}_{W^{(1)}} & \hspace*{-0.3cm} & \hspace*{-0.3cm} \cdots & \hspace*{-0.3cm} & \hspace*{-0.3cm} J^{(0)}\mathcal{O}_{W^{(r)}} \\ \EReg(X) & \hspace*{-0.3cm} & \hspace*{-0.3cm} \pi_1^{-1}(\EReg(X)) & \hspace*{-0.3cm} & \hspace*{-0.3cm} \cdots & \hspace*{-0.3cm} & \hspace*{-0.3cm} \pi_r^{-1}(\ldots(\pi_1^{-1}(\EReg(X)))\ldots) \end{array}$$ such that 
\begin{itemize}
	\item each center $Z^{(k)}\subset \ESing(J^{(k)},1)$ has normal crossings with the exceptional divisors $H^{(k)}=\{H_1,\ldots,H_k\}$. In fact $Z^{(k)}=\cap_{i\in \mathcal{I}}H_i$ where $\mathcal{I}\subseteq \{1,\ldots,k\}$,
	\item the total transform of $J^{(0)}$ at (each affine chart of) $W^{(r)}$ is of the form $$J^{(0)}\mathcal{O}_{W^{(r)}}=<\texttt{M}_1\cdot(1-\mu_1y^{\delta_1}),\texttt{M}_2\cdot(1-\mu_2y^{\delta_2}),\ldots,\texttt{M}_r\cdot(1-\mu_r y^{\delta_r}),\epsilon\cdot\texttt{M}_{r+1}>$$ as in (\ref{totaljota}).
\end{itemize}
In addition, by the above argument  $$\emptyset\neq\EReg(X)\cong\pi_1^{-1}(\EReg(X))\cong\ldots\cong\pi_r^{-1}(\ldots(\pi_1^{-1}(\EReg(X)))\ldots)$$ that is, $\EReg(X)\cong\pi^{-1}(\EReg(X))$ where $\pi=\pi_r\circ\ldots\circ\pi_1$.
\medskip

Once the locally monomial resolution is achieved, by proposition \ref{escrituralocal}, for all $\xi\in W^{(r)}$ there exist local coordinates $\{z_1,\ldots,z_n\}\in \widehat{K[x,y]_{\xi}}$ such that 
$$J^{(0)}\mathcal{O}_{W^{(r)}}\cdot\widehat{K[x,y]_{\xi}}=<z^{\lambda_1},\ldots,z^{\lambda_t}>$$ where $z=(z_1,\ldots,z_n)$, $\lambda_i\in\mathbb{N}^n$ for $1\leq i\leq t$, and $t\leq \min(r+1,n)$. 
\medskip

Consider the normal crossing divisor $E^{*}=\{V(z_1),\ldots,V(z_n)\}$ in $W^{(r)}$. Factorize the total transform $J^{(0)}\mathcal{O}_{W^{(r)}}\cdot\widehat{K[x,y]_{\xi}}=M\cdot Q$ where $M$ is a monomial ideal with support at $E^{*}$.  

Then, in a $\grave{\rm e}$tale neighborhood of the point $\xi$, $U_{z,\xi}$, consider the BBOE $$(U_{z,\xi}\subset W^{(r)},(Q,1),\emptyset,E^{*}).$$ Note that $\ESing(Q,1)=\Sing(Q,1)$ since $Q$ is a monomial ideal with respect to $\{z_1,\ldots,z_n\}$. 

The isomorphism $\EReg(X)\cong\pi^{-1}(\EReg(X))$ provides $$\pi^{-1}(\EReg(X))\subset \Sing(Q,1)=\ESing(Q,1)$$ since the points at $\EReg(X)$ can not be included in the support of $M$. 

Let $X^{(r)}$ be the strict transform of $X$ in $U_{z,\xi}\subset W^{(r)}$. Then $$\pi^{-1}(\EReg(X))\subset X^{(r)}$$ and it is a non empty open dense subset of $X^{(r)}$. 

Apply the algorithm \ref{resobb} of $E$-resolution of BBOE to $(U_{z,\xi},(Q,1),\emptyset,E^{*})$. The output is the following sequence of transformations $$\hspace*{-1cm} (U_{z,\xi},(Q,1),\emptyset,E^{*}) \stackrel{\pi_{r+1}}{\leftarrow}\ldots\stackrel{\pi_N}{\leftarrow}  (W^{(N)},(J^{(N)},1),H^{(N)},E^{(N)})\stackrel{\pi_{N+1}}{\leftarrow}\ldots $$ $$\ldots \stackrel{\pi_{N_1}}{\leftarrow} (W^{(N_1)},(J^{(N_1)},1),H^{(N_1)},E^{(N_1)})$$

at permissible centers $Z^{(k)}=\EMaxB t^{(k)}=\MaxB t^{(k)}$ and such that for some index $N_1$ this sequence is a resolution, that is, $\Sing(J^{(N_1)},1)=\emptyset$. 

Observe that $\MaxB t^{(k)}$ denotes the set $\MaxB t^{(k)}=\{\xi\in\Sing(J^{(k)},1)|\ t^{(k)}(\xi)=\max t^{(k)}\}$.
\medskip

By property \ref{propi}, the resolution function at some stage $j$, with $j<N_1$, $t^{(j)}:\Sing(J^{(j)},1)\rightarrow \mathcal{I}_l$ is constant along $\EReg(X)$, and takes the value $$(\overbrace{1,\ldots,1}^{l},\infty,\ldots,\infty)$$ where $l$ is the codimension of $\EReg(X)$. 

Since the resolution function drops after blowing up, there exists a unique index $N$ such that $max\ t^{(N)}=(1,\ldots,1,\infty,\ldots,\infty)$ and the maximal value of the resolution function for $k<N$ is $max\ t^{(k)}>(1,\ldots,1,\infty,\ldots,\infty)$.  

Hence $\EReg(X)\cong\Pi^{-1}(\EReg(X))$ where $\Pi=\pi_N\circ\ldots\circ\pi_r\circ\ldots\circ\pi_1$ because of, up to now, we only have modified points where the maximal value of the resolution function was strictly bigger than $(1,\ldots,1,\infty,\ldots,\infty)$.
\medskip  

Therefore $\Pi^{-1}(\EReg(X))\subset Z^{(N)}=\MaxB t^{(N)}$ and moreover the strict transform of $X$, $X^{(N)}\subset Z^{(N)}$ then $$\Pi^{-1}(\EReg(X))\subset X^{(N)}\subset Z^{(N)}$$ is an open dense in $X^{(N)}$ having the same codimension as $X^{(N)}$. Hence $X^{(N)}=\bigcup_i C_i$ is a union of connected components $C_i$ of $Z^{(N)}$ such that $$C_i\subset \overline{\Pi^{-1}(\EReg(X))}=X^{(N)}$$ is regular and has normal crossings with $E^{(N)}$. As a consequence, $C_i$ has normal crossings with $H^{(N)}\subset E^{(N)}$, and therefore $X^{(N)}$ is regular and has normal crossings with $H^{(N)}$. \\

Since algorithm \ref{combalg} is equivariant, this embedded desingularization is also equivariant. 
\end{proof}

\begin{remark}
Note that the processes of resolution of singularities of various (local)
charts of affine BBOE patch up to form a unique process of resolution of singularities of the
non affine BBOE. 
\end{remark}

\begin{remark} The different processes of resolution of singularities
of charts patch up since the resolution function is a local invariant (remark \ref{localinv}) and every center of blowing up is compatible with the centers defined at other charts (proposition \ref{Igorro}). 
\end{remark}

\bibliographystyle{plain}

\begin{thebibliography}{GPT02}

\bibitem{BenitoVillamayor2008}
Ang\'elica Benito and Orlando Villamayor.
\newblock Monoidal transformations of singularities in positive characteristic,
  2008.
\newblock Preprint. arXiv:0811.4148v2 [math.AG].

\bibitem{BierstoneMilman1997}
Edward Bierstone and Pierre~D. Milman.
\newblock Canonical desingularization in characteristic zero by blowing up the
  maximum strata of a local invariant.
\newblock {\em Invent. Math.}, 128(2):207--302, 1997.

\bibitem{lemabm}
Edward Bierstone and Pierre~D. Milman.
\newblock Desingularization of toric and binomial varieties.
\newblock {\em J. Algebraic Geom.}, 15(3):443--486, 2006.

\bibitem{Blanco2007}
Roc\'{\i}o Blanco.
\newblock  Complexity of \(\text{V}\)illamayor's algorithm in the non
  exceptional monomial case.
\newblock {\em International Journal of Mathematics}. 20(6):659-678, 2009. 

\bibitem{ThesisBlanco2008}
Roc\'{\i}o Blanco.
\newblock Aspectos Algor\'{\i}tmicos en Resoluci\'on de Singularidades.
  Ideales Monomiales y Binomiales.
\newblock \(\textrm{Ph.D.}\) thesis, Universidad de Valladolid, January 2008.
\newblock Avalaible at
  \texttt{http://www.singacom.uva.es/\~{}rblanco/thesis/tesis.pdf}.

\bibitem{part1}
Roc\'{\i}o Blanco.
\newblock  Desingularization of binomial varieties in arbitrary characteristic. Part I. A new resolution function and their properties.
\newblock Preprint.

\bibitem{BravoVillamayor2008}
Ana Bravo and Orlando Villamayor.
\newblock Hypersurface singularities in positive characteristic and
  stratification of the singular locus, 2008.
\newblock Preprint. arXiv:0807.4308v1 [math.AG].

\bibitem{Cox2000}
David~A. Cox.
\newblock Toric varieties and toric resolutions.
\newblock In {\em Resolution of singularities (Obergurgl, 1997)}, volume 181 of
  {\em Progr. Math.}, pages 259--284. Birkh\"auser, Basel, 2000.

\bibitem{sturm}
David Eisenbud and Bernd Sturmfels.
\newblock Binomial ideals.
\newblock {\em Duke Math. J.}, 84(1):1--45, 1996.

\bibitem{strong}
Santiago Encinas and Herwig Hauser.
\newblock Strong resolution of singularities in characteristic zero.
\newblock {\em Comment. Math. Helv.}, 77(4):821--845, 2002.

\bibitem{course}
Santiago Encinas and Orlando Villamayor.
\newblock A course on constructive desingularization and equivariance.
\newblock In {\em Resolution of singularities (Obergurgl, 1997)}, volume 181 of
  {\em Progr. Math.}, pages 147--227. Birkh\"auser, Basel, 2000.

\bibitem{EncinasVillamayor2003}
Santiago Encinas and Orlando Villamayor.
\newblock A new proof of desingularization over fields of characteristic zero.
\newblock Proceedings of the International Conference on Algebraic Geometry and Singularities (Spanish) (Sevilla, 2001). Rev. Mat. Iberoamericana, Rev. Mat. Iberoamericana, Madrid, 2003, vol 19(2), pp. ~339--353. 
 
\bibitem{encinas07}
Santiago Encinas and Orlando Villamayor.
\newblock Rees algebras and resolution of singularities.
\newblock Proceedings of the {XVI}th {L}atin {A}merican {A}lgebra {C}olloquium ({S}panish), Bibl. Rev. Mat. Iberoamericana, Rev. Mat. Iberoamericana, Madrid, 2007, pp.~63--85. 

\bibitem{GonzalezTeissier2002}
Pedro~Daniel Gonz{\'a}lez~P{\'e}rez and Bernard Teissier.
\newblock Embedded resolutions of non necessarily normal affine toric
  varieties.
\newblock {\em C. R. Math. Acad. Sci. Paris}, 334(5):379--382, 2002.

\bibitem{Goward2005}
Russell~A. Goward, Jr.
\newblock A simple algorithm for principalization of monomial ideals.
\newblock {\em Trans. Amer. Math. Soc.}, 357(12):4805--4812 (electronic), 2005.

\bibitem{Hironaka1964}
Heisuke Hironaka.
\newblock Resolution of singularities of an algebraic variety over a field of
  characteristic zero. {I}, {II}.
\newblock {\em Ann. of Math. (2) 79 (1964), 109--203; Ibid. (2)}, 79:205--326,
  1964.

\bibitem{Hironaka1977}
Heisuke Hironaka.
\newblock Idealistic exponents of singularity.
\newblock In {\em Algebraic geometry (J. J. Sylvester Sympos., Johns Hopkins
  Univ., Baltimore, Md., 1976)}, pages 52--125. Johns Hopkins Univ. Press,
  Baltimore, Md., 1977.

\bibitem{KawanoueI}
Hiraku Kawanoue.
\newblock Toward resolution of singularities over a field of po\-sitive
  characteristic. {P}art {I}. {F}oundation; the language of the idealistic
  filtration.
\newblock {\em Publ. RIMS}, 43(3):819--909, 2007.

\bibitem{KawanoueII}
Hiraku Kawanoue and Kenji Matsuki.
\newblock Toward resolution of singularities over a field of positive
  characteristic (The Kawanoue program) Part II. Basic invariants associated to
  the idealistic filtration and their properties.   Publ. Res. Inst. Math. Sci. 46, no.~2, in printing.

\bibitem{toro}
G.~Kempf, Finn~Faye Knudsen, D.~Mumford, and B.~Saint-Donat.
\newblock {\em Toroidal embeddings. {I}}.
\newblock Springer-Verlag, Berlin, 1973.
\newblock Lecture Notes in Mathematics, Vol. 339.

\bibitem{Teissier2004}
Bernard Teissier.
\newblock Monomial ideals, binomial ideals, polynomial ideals.
\newblock In {\em Trends in commutative algebra}, volume~51 of {\em Math. Sci.
  Res. Inst. Publ.}, pages 211--246. Cambridge Univ. Press, Cambridge, 2004.

\bibitem{Villamayor1989}
Orlando Villamayor.
\newblock Constructiveness of {H}ironaka's resolution.
\newblock {\em Ann. Sci. \'Ecole Norm. Sup. (4)}, 22(1):1--32, 1989.

\bibitem{Villamayor1992}
Orlando Villamayor.
\newblock Patching local uniformizations.
\newblock {\em Ann. Sci. \'Ecole Norm. Sup. (4)}, 25(6):629--677, 1992.

\bibitem{Orlando06}
Orlando Villamayor.
\newblock Rees Algebras on Smooth Schemes: Integral Closure and Higher
  Differential Operators. Preprint. arXiv:math/0606795v1 [math.AC], 2006.

\bibitem{Villamayor2007}
Orlando Villamayor.
\newblock Hypersurface singularities in positive characte\-ristic.
\newblock {\em Adv. Math.}, 213:687--733, 2007.

\bibitem{Wlodarczyk2005}
Jaros{\l}aw W{\l}odarczyk.
\newblock Simple {H}ironaka resolution in characteristic zero.
\newblock {\em J. Amer. Math. Soc.}, 18(4):779--822 (electronic), 2005.

\end{thebibliography}

\providecommand{\bysame}{\leavevmode\hbox to3em{\hrulefill}\thinspace}

\ \\
Roc\'{\i}o Blanco \\
Universidad de Castilla-La Mancha. Departamento de Matem\'aticas. \\ E.U. de Magisterio.
  Edificio Fray Luis de Le\'on.\\ Avda. de los Alfares 42, 16071 Cuenca, Spain.\\
mariarocio.blanco@uclm.es
\noindent 

\end{document}